\newtheorem{theorem}{Theorem}
\newtheorem{lemma}[theorem]{Lemma}
\newtheorem{proposition}[theorem]{Proposition}
\newtheorem{corollary}[theorem]{Corollary}
\newtheorem{conjecture}[theorem]{Conjecture}
\theoremstyle{definition}
\newtheorem{definition}{Definition}
\newcommand{\bb}{\mathbb}
\newcommand{\mc}{\mathcal}
\newcommand{\Z}{\bb{Z}}
\newcommand{\F}{\mc{F}}
\newcommand{\E}{\bb{E}}
\renewcommand{\P}{\bb{P}}
\newcommand{\Range}{\mathrm{Range}}
\title{On the Distribution of Range for Tree-Indexed Random Walks}
\author{Aaron Berger\thanks{MIT, \textit{bergera@mit.edu}}, Caleb Ji\thanks{Washington University, St. Louis, \textit{caleb.ji@wustl.edu}}, Erik Metz\thanks{University of Maryland, \textit{emetz@umd.edu}}}
\date{}
\begin{document}

\maketitle

\begin{abstract}
    We study tree-indexed random walks as introduced by Benjamini, H\"aggstr\"om, and Mossel, i.e. labelings of a tree for which adjacent vertices have labels differing by 1. It is a conjecture of those authors that the distribution of the range for any such tree is dominated by that of a path on the same number of edges. The two main variants of this conjecture considered in the literature are the \textit{standard} walks, in which adjacent vertices must have labels differing by \textit{exactly} 1, and \textit{lazy} walks, in which adjacent vertices must have labels differing by \textit{at most} 1. We confirm this conjecture for all trees in the lazy case and provide some partial results in the standard case.
\end{abstract}
\textit{Keywords: Random graph homomorphisms, Lipschitz functions on graphs, Graph-indexed random walks, tree-indexed random walks}
\section{Introduction}
In 2000, Benjamini, H\"aggstr\"om, and Mossel \cite{bhm00} began the study of \textit{random graph homomorphisms into $\Z$}, alternatively known as \textit{graph-indexed random walks}. For a graph $G = (V,E)$ with distinguished vertex $v_0 \in E$, the \textit{$G$-indexed walks} are labelings of the following form:
\begin{equation*}
\F(G,v_0) := \left\{ f: V \to \Z \mid f(v_0) = 0, \{u,v\} \in E \implies |f(u) - f(v)| = 1\right\}.
\end{equation*} 
As defined, such walks only exist when $G$ is bipartite, and so Loebl, Ne{\v{s}}et{\v{r}}il, and Reed \cite{lnj03} propose a model in which $\{u,v\} \in E$ implies $|f(u) - f(v)| \le 1$. These labelings are sometimes referred to as \textit{1-Lipschitz functions on graphs}, but here we will refer to them as \textit{lazy random walks} $\mc F'(G,v_0)$ for consistency of terminology. Following Benjamini, H\"aggstr\"om, and Mossel, let $f$ be a $G$-indexed walk chosen uniformly at random from $\F$ (or $\mc F'$, and consider properties such as the expected distance between a fixed pair of vertices and the expected range. Note that both of these quantities are invariant when adding a constant to all labels in a labeling of $G$, and as such are independent of the choice of $v_0$. We can then ignore the information of the choice of $v_0$ and simply refer to the space of labelings as $\F(G)$ (and $\mc F'(G)$).

When comparing walks on different graphs, intuition would suggest that graph-indexed random walks on paths would be likely to have the largest range, and that adding more edges to a graph would necessarily bring vertices closer together in expectation. This second statement is not always true: Benjamini et al. exhibit a graph $G$ with two vertices $u$ and $v$, such that $\bb E(|f(u) - f(v)|)$ actually \textit{increases} upon adding an edge to $G$. Despite this, they also show for any $G, u, v$ that $\bb E(|f(u) - f(v)|)$ increases when $G$ is pared down to any path from $u$ to $v$, as one would expect. Moreover, a stronger statement holds--the distribution is \textit{stochastically dominated} by that of a path:
\begin{theorem}[\cite{bhm00}, Theorem 2.8]
    Let $G = (V,E)$ be a bipartite, connected, finite graph, let $u,v \in V$ and let $P$ be any path from $u$ to $v$ in $G$. Then for all $k$,
    \begin{equation}
    \P_{f \in \F(G)}\left(|f(u)-f(v)| \ge k\right)  \le \P_{f \in \F(P)} \left(|f(u)-f(v)| \ge k\right).
    \end{equation}
\end{theorem}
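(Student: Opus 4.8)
The plan is to reduce the general graph to the path in two stages: first eliminate everything off the path by marginalization, then compare the resulting reweighted walk to the genuine simple random walk on $P$. To set the benchmark, write $P: u = x_0, x_1, \dots, x_\ell = v$ and note that a uniform element of $\F(P)$ is exactly a simple random walk: the increments $a_i := f(x_i) - f(x_{i-1}) \in \{+1,-1\}$ are independent and uniform, so $f(u) - f(v) = -\sum_{i=1}^\ell a_i$ has the law of $\ell$ fair $\pm 1$ steps and $\P_{f \in \F(P)}(|f(u)-f(v)| \ge k)$ is an explicit symmetric random-walk tail. By the global symmetry $f \mapsto -f$, valid on both $\F(G)$ and $\F(P)$, it suffices to control the one-sided tail $\P(f(v) - f(u) \ge k)$.

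Next I would pass from $G$ to the path by integrating out the off-path vertices. Writing $\phi = (f(x_0), \dots, f(x_\ell))$ for the restriction to $P$, the marginal law under $\F(G)$ is $\mu_G(\phi) \propto \mathbf{1}[\phi \text{ a valid walk}] \cdot N(\phi)$, where $N(\phi)$ counts the homomorphisms of $G$ agreeing with $\phi$ on $P$, whereas the path law is the unweighted $\mu_P(\phi) \propto \mathbf{1}[\phi \text{ a valid walk}]$. Thus $\mu_G$ is $\mu_P$ tilted by the extension count $N$. A convenient simplification is that a pendant tree hanging off $G$ contributes a factor to $N$ that is independent of the label at its attachment vertex, by translation invariance of the homomorphism count on a tree; peeling all such pieces contributes only a global constant and reduces matters to the $2$-core together with $P$, the essential case being $V(G)=V(P)$ with $G$ equal to $P$ plus extra chords, where $N(\phi)$ is a product of indicators over the chords.

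The heart of the matter is then to show that the tilt by $N$ only concentrates the endpoint gap, i.e. $\P_{\mu_G}(s_\ell \ge k) \le \P_{\mu_P}(s_\ell \ge k)$ where $s_i := f(x_i) - f(x_0)$. I would attempt this by the reflection principle: let $\tau$ be the first index at which the partial displacement $s_i$ reaches $k$, and reflect the tail of the walk past $\tau$ about the level $k$. On the bare path this map is a weight-preserving bijection and reproduces the simple-random-walk tail exactly; the content is that under $\mu_G$ the same map must be shown weight-\emph{nondecreasing}, i.e. reflecting the labels on $x_\tau, \dots, x_\ell$ about $k$ does not decrease $N$.

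I expect this last step to be the main obstacle, and it is exactly where the difficulty of the problem is concentrated. Reflecting only the tail of the walk corresponds to reflecting the labels on a subset of vertices, and the chords and off-path edges joining that subset to the rest of $G$ are precisely what the reflection can violate; this is the same phenomenon behind the example of Benjamini, H\"aggstr\"om, and Mossel in which adding an edge \emph{increases} an expected distance, and it is why a naive induction deleting the edges of $G \setminus P$ one at a time cannot work. To push the argument through I would exploit that we compare against the \emph{full} path $P$, so that every intermediate constraint is present simultaneously rather than in an intermediate graph, and I would try to recast the interaction of $N$ with reflection as a correlation inequality — for instance, showing that the extra chord constraints induce negative association among the increments $a_1, \dots, a_\ell$, which would dominate $|\sum a_i|$ by the independent-coin sum without requiring a literal reflecting bijection.
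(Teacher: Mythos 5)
This statement is quoted by the paper from \cite{bhm00} as background and is not proved anywhere in the paper, so there is no internal proof to compare against; I can only assess your argument on its own terms. The preliminary reductions are correct: the uniform measure on $\F(P)$ is an i.i.d.\ $\pm 1$ walk, the negation symmetry reduces the problem to a one-sided tail, and marginalizing $\F(G)$ onto the path yields the uniform walk measure tilted by the extension count $N(\phi)$. (One caveat: after peeling pendant trees the remaining off-path structure need not consist only of chords between path vertices --- $G$ may contain cycles passing through off-path vertices --- so $N(\phi)$ is a general extension count rather than a product of chord indicators; this does not break the framework, but it removes the simplification you lean on.)

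The genuine gap is that the entire content of the theorem sits in the step you explicitly defer, and neither of your proposed routes for it is carried out or even known to work. For the reflection route: even granting that reflecting the labels past the first hitting time of level $k$ never decreases $N$, such a map only compares two events \emph{within} the tilted measure $\mu_G$; the theorem is a comparison \emph{across} the two normalized measures, i.e.\ the inequality $\sum_{\phi\,:\,s_\ell \ge k} N(\phi) \big/ \sum_\phi N(\phi) \le \bigl|\{\phi : s_\ell \ge k\}\bigr| \big/ 2^\ell$, which is a negative-correlation statement between $N$ and the tail event under the uniform walk measure, and your sketch does not explain how a weight-monotone reflection would produce it. For the fallback route: negative association of the increments $a_1,\dots,a_\ell$ would give Chernoff-type bounds matching the independent case, but it does not by itself imply the exact pointwise tail domination $\P\left(\sum a_i \ge k\right) \le \P\left(\sum a_i' \ge k\right)$ at every level $k$ that stochastic domination requires, and you give no argument that the off-path constraints actually induce negative association. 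Since you yourself identify this step as ``exactly where the difficulty of the problem is concentrated,'' what you have is a reasonable framing of the problem, not a proof.
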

Stochastic domination is equivalent to stating that for any increasing $g$, the expectation of $g(|f(u)-f(v)|)$ is greater for a path than for any other graph. Taking $g(x) = x$ yields the weaker result that the expected difference between the labels of $u$ and $v$ is larger for a path, but domination also implies that quantities such as the expected squared distance are larger for the path as well.

Loebl, Ne{\v{s}}et{\v{r}}il, and Reed prove a similar, but weaker, result in the lazy random walk setting:
\begin{theorem}[\cite{lnj03}, Theorem 4]
Let $G = (V,E)$ be a connected, finite graph (not necessarily bipartite) with $n$ vertices, let $u,v \in V$ and let $P$ be a path with $n$ vertices. Then for all $k$,
    \begin{equation}
    \E_{f \in \F'(G)}\left(|f(u)-f(v)|\right)  \le \E_{f \in \F'(P)} \left(|f(u)-f(v)|\right).
    \end{equation}
\end{theorem}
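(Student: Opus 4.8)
The plan is to bound the left-hand side by the expected distance between the endpoints of a path from $u$ to $v$ living inside $G$, and then to show that among paths this quantity only grows with the number of edges, so that it is maximized by the Hamiltonian-length path $P$ on all $n$ vertices. Since the involution $f \mapsto -f$ preserves $\F'(G)$, the law of $f(u)-f(v)$ is symmetric about $0$, which lets me freely pass between the signed displacement and $|f(u)-f(v)|$ whenever that is convenient.

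First I would settle the monotonicity among paths. On a path with $m$ edges a lazy walk is exactly a partial sum $S_m = X_1 + \cdots + X_m$ of i.i.d. steps uniform on $\{-1,0,1\}$, and $|f(u)-f(v)| = |S_m|$ when $u,v$ are its endpoints. Because each $X_i$ is symmetric with mean zero, conditional Jensen gives $\E[\,|S_{m+1}| \mid S_m\,] \ge |\E[S_{m+1}\mid S_m]| = |S_m|$, so $\E|S_{m+1}| \ge \E|S_m|$ and $m \mapsto \E|S_m|$ is nondecreasing. As any self-avoiding path between two vertices of an $n$-vertex graph has at most $n-1$ edges while $P$ has exactly $n-1$, this reduces the theorem to comparing $G$ with a single path $Q$ from $u$ to $v$ inside it.

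The core is therefore a lazy analogue of the paring statement of Benjamini, H\"aggstr\"om and Mossel: I want $\E_{f\in \F'(G)}|f(u)-f(v)| \le \E_{f \in \F'(Q)}|f(u)-f(v)|$ for a suitable path $Q$ from $u$ to $v$. Two features special to lazy walks help here. A leaf $w \notin \{u,v\}$ attached at $x$ always admits exactly three extensions, $f(w) \in \{f(x)-1,f(x),f(x)+1\}$, irrespective of the rest of $f$; hence marginalizing it out preserves the uniform measure and leaves $|f(u)-f(v)|$ unchanged. Iterating, I may delete every pendant subtree and assume $G$ consists of its $2$-core together with the two arcs joining $u$ and $v$ to it. What remains is to open up the cycles, and the plan is to condition on the restriction of $f$ to $Q$ and couple the two edges of a cycle leaving a branch vertex by a sign flip, so that closing the cycle can only pull $|f(u)-f(v)|$ toward the origin; for a single cycle $C_m$ this is precisely the statement that a lazy bridge fluctuates less than the corresponding free walk.

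The main obstacle is exactly this cycle-opening step. The difficulty is that expected distance is \emph{not} monotone under deleting one edge --- Benjamini et al.\ exhibit graphs where adding an edge \emph{increases} $\E|f(u)-f(v)|$ --- so the reduction cannot be carried out edge by edge and must instead be done holistically, comparing the two uniform measures only after integrating over all thresholds. This is also why one should not expect the stronger stochastic domination in this setting: the right statement is the integrated identity $\E|f(u)-f(v)| = \sum_{k \ge 1}\P\!\left(|f(u)-f(v)|\ge k\right)$, in which individual tail terms are allowed to go the wrong way provided the sum is controlled. A clean execution would either build an explicit measure-preserving map from $\F'(G)$ to $\F'(Q)$ that never decreases $|f(u)-f(v)|$, or establish the bridge-versus-free comparison on the $2$-core by a reflection argument and then feed it through the conditioning described above.
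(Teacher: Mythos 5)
This statement is quoted from Loebl, Ne\v{s}et\v{r}il, and Reed and is not proved anywhere in the present paper, so there is no internal proof to compare against; judged on its own terms, your proposal has a genuine gap at its central step. The parts you actually carry out are fine: marginalizing out a leaf $w \notin \{u,v\}$ does preserve the uniform measure on $\F'$ of the smaller graph (every labeling of $G-w$ has exactly three extensions), so the theorem follows for trees once you add the monotonicity $\E|S_m| \le \E|S_{m+1}|$ for sums of i.i.d.\ uniform $\{-1,0,1\}$ steps, which your conditional Jensen argument gives correctly.

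The gap is the reduction from the $2$-core to a path, which is the entire content of the cited result for non-tree graphs. You propose to ``couple the two edges of a cycle leaving a branch vertex by a sign flip, so that closing the cycle can only pull $|f(u)-f(v)|$ toward the origin,'' but this is precisely the kind of local, one-cycle-at-a-time monotonicity claim that the Benjamini--H\"aggstr\"om--Mossel counterexample (adding an edge can \emph{increase} $\E|f(u)-f(v)|$) warns may fail, and you supply no proof of it: no coupling is defined, the ``lazy bridge versus free walk'' comparison is only asserted for a single cycle containing $u$ and $v$, and nothing is said about iterating the step when cycles share edges or when $u,v$ do not lie on the cycle being opened. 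Your own closing sentence concedes that a ``clean execution'' of this step is still needed, which is an accurate self-assessment: as written, the argument establishes the theorem for trees but not for general graphs. Identifying the global argument that replaces the failed edge-by-edge reduction is the missing idea, not a detail to be filled in.
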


For the purposes of this paper we define the range of a graph labeling $\Range(f) := \max_{u,v \in V} f(u) - f(v).$ Benjamini, H\"aggstr\"om, and Mossel make the following two conjectures regarding the range of a graph-indexed random walk:

\begin{conjecture}[\cite{bhm00}, Conjecture 2.10]
Let $G$ be a simple connected graph on $n$ vertices and let $P$ be the path on $n$ vertices. Then:
\begin{itemize}
    \item (Weak). $\E_{f \in \F(G)}\Range(f) \leq \E_{f \in \F(P)}\Range(f)$.
    \item (Strong). $\P_{f \in \F(G)}(Range(f) \ge k) \leq \P_{f \in \F(P)}(Range(f) \ge k)$, for all $k$.
\end{itemize}

\end{conjecture}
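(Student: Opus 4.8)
The plan is to prove the strong statement and deduce the weak one from the layer-cake identity $\E\,\Range(f)=\sum_{k\ge1}\P(\Range(f)\ge k)$; thus the whole problem reduces to the tail bound $\P_{f\in\F(G)}(\Range(f)\ge k)\le\P_{f\in\F(P)}(\Range(f)\ge k)$ for every $k$. First I would record the reformulation that, for a standard walk on a connected graph, the image $f(V)$ is a contiguous block of integers, so $\Range(f)\ge k$ is equivalent to $\max_V f-\min_V f\ge k$, i.e.\ to the existence of two vertices whose labels differ by at least $k$. This recasts the target as a statement about the joint law of the two extreme values of $f$ --- precisely the global object that Theorem 2.8 does \emph{not} reach, since that result controls only a single fixed pair $(u,v)$.

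The structural backbone of the argument is a two-stage reduction to the path, organized by edge count. Because the path is the connected graph on $n$ vertices with the fewest edges, I would first show that deleting an edge can only \emph{increase} the range in the stochastic order, and iterate this down to a spanning tree $T$, giving $\P_{\F(G)}(\Range\ge k)\le\P_{\F(T)}(\Range\ge k)$. The second stage is the tree case: among all trees on $n$ vertices the path is extremal, $\P_{\F(T)}(\Range\ge k)\le\P_{\F(P)}(\Range\ge k)$, which I would attack by leaf-surgery induction --- delete a leaf $\ell$ with neighbor $p$, apply the inductive hypothesis to the smaller tree, and couple the two admissible extensions $f(\ell)=f(p)\pm1$ with the corresponding extensions on the path so that relocating $\ell$ toward the end of the path never decreases $\max_V f-\min_V f$. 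Both reductions must be performed so as to preserve the two-coloring that makes $\F$ nonempty.

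I expect the decisive obstacle to be the edge-deletion step for general $G$. Writing $\F(H+e)=\{f\in\F(H):|f(x)-f(y)|=1\}$ for the new edge $e=\{x,y\}$, the claim is the correlation inequality $\P_{\F(H)}(\Range\ge k\mid|f(x)-f(y)|=1)\le\P_{\F(H)}(\Range\ge k)$, i.e.\ that spreading the image out makes the endpoints of $e$ \emph{less} likely to be adjacent in value. This cannot follow from any soft, purely local monotonicity: Benjamini, H\"aggstr\"om and Mossel already exhibit a graph in which adding an edge \emph{increases} a pairwise expected distance, so a coupling that tracks only one pair of vertices is doomed, and one must instead exploit that the extremes $\max_V f$ and $\min_V f$ are boundary data of the image interval rather than arbitrary pairwise differences. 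The plan is to condition on all labels except those forced by $e$ and prove a one-dimensional rearrangement/reflection monotonicity for the conditional law of the range; carrying this through is where I anticipate the real work.

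This is also exactly the point at which the standard model $\F$ is harder than the lazy model $\mc F'$. In the lazy model a vertex may repeat its neighbor's value, which decouples the conditional one-dimensional steps and makes the edge-deletion coupling essentially monotone by inspection; the parity rigidity of $\F$ instead couples $\max_V f$ and $\min_V f$ across the whole graph and blocks the naive coupling. I would therefore expect to obtain the full conjecture for trees in both models, and the lazy conjecture for all graphs via the clean coupling, while for general graphs under $\F$ the realistic deliverable is a conditional reduction that isolates the single missing ingredient --- monotonicity of the range under edge addition in the standard model --- together with its verification for structured families (for instance graphs admitting an edge order along which the conditional monotonicity holds, or those whose cycles are long enough that the parity obstruction is inert).
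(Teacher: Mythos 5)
The statement you set out to prove is presented in the paper as an open conjecture; the paper itself establishes it only for all trees in the lazy model (Theorem \ref{Main Result 1}) and for spiders in the standard model (Theorem \ref{Main Result 2}), so a complete proof of the general statement would go far beyond what is known. Your proposal does not supply one: both stages of your reduction are left as plans rather than arguments. The edge-deletion stage (general $G$ down to a spanning tree) rests on the correlation inequality $\P_{f\in\F(H)}\left(\Range(f)\ge k \mid |f(x)-f(y)|=1\right)\le \P_{f\in\F(H)}\left(\Range(f)\ge k\right)$, which you correctly identify as the crux and then explicitly defer; this step appears nowhere in the paper and is, as far as the authors indicate, open. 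Your further assertion that in the lazy model the edge-deletion coupling is ``essentially monotone by inspection'' is unsupported and inconsistent with the paper, which proves the lazy result only for trees, not for all graphs. By the end of your proposal you have in effect retreated to a list of things you hope are true, which is not a proof of any part of the conjecture.

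The tree-to-path stage is also not sound as described. Your leaf-surgery induction requires a local monotonicity statement of the form ``relocating a pendant vertex toward the end of the path never decreases the range,'' and the paper's own remarks exhibit a concrete obstruction to exactly this kind of local argument in the standard model: the quantity $f_k^2(P_3)$ (labelings of the $3$-edge path in $[0,2]$ with first vertex labeled $k$ and some vertex labeled $2$) is \emph{not} monotone as $k$ approaches $2$ (three labelings at $k=1$ versus two at $k=2$), and there is a $7$-edge tree with two vertices of degree $3$ that is dominated by no tree other than the path, so no induction that modifies the tree near a single high-degree vertex at a time can handle all trees in the standard model. The paper's actual tree argument is structurally different from yours: it counts labelings $F^k(T)$ confined to a window $[0,k]$, conditions on the label of a root, merges two bare legs emanating from a common vertex into a single path, and proves that the deficit $F^k(T)-F^k(T'\cup P_{a+b})$ is nonnegative and nondecreasing in $k$ (Lemma \ref{spidersums} combined with Lemmas \ref{Bigger Difference} and \ref{lazy f center is better}); taking successive differences in $k$ then yields domination of the range distribution. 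That machinery hinges on the monotonicity ``a root closer to the center of the window admits more labelings'' (Lemma \ref{Center Is Bigger}, Lemma \ref{lazy center is bigger}), which extends to all rooted trees in the lazy model but fails for general rooted trees in the standard model (e.g.\ a star rooted at a leaf) --- precisely why the standard-model result stops at spiders. Your expectation of obtaining the full tree case in the standard model is therefore not justified by your outline.
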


The conjecture may be analogously formulated in the lazy walk case. In the literature, there has been some progress made on the weak conjecture, and no progress made on the strong conjecture. Wu, Xu, and Zhu \cite{wxz16} resolve the weak conjecture in the affirmative for trees for both the standard and lazy random walks, and Bok and Ne{\v{s}}et{\v{r}}il \cite{bn18} extend this work to confirm the weak conjecture for unicyclic graphs. Loebl, Ne{\v{s}}et{\v{r}}il, and Reed \cite{lnj03} show that the expected range for any graph is bounded by some absolute constant multiple of the expected range of a path, in the lazy case. The main results of our paper resolve the strong conjecture in the affirmative for all trees in the lazy case, and for \textit{spiders},  trees with (at most) one vertex with degree greater than 2, in the standard case:
\begin{theorem}\label{Main Result 1}
    Let $T$ be a tree on $n$ vertices and $P$ be the path on $n$ vertices. Then for all $k$,
    \begin{equation*}
        \P_{f \in \F'(T)}(Range(f) \ge k) \leq \P_{f \in \F'(P)}(Range(f) \ge k).
    \end{equation*}
\end{theorem}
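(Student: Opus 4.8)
The plan is to pass from the probabilistic statement to an unnormalized counting comparison, exploiting a feature special to trees. Observe that a lazy labeling of any tree $T$, taken up to an additive constant, is determined by and freely determines its vector of oriented edge-differences in $\{-1,0,1\}^{E(T)}$: since $T$ is acyclic, every such vector integrates to a valid labeling, and distinct vectors give distinct labelings modulo translation. Hence the number of lazy labelings modulo translation is exactly $3^{n-1}$ for \emph{every} tree on $n$ vertices, in particular the same for $T$ and for $P$. Writing $A_m(G)$ for the number of lazy homomorphisms $G \to \{0,1,\dots,m\}$ attaining the value $0$ (equivalently, labelings of range $\le m$ in canonical position), we have $\P_{f\in\mc{F}'(G)}(\Range(f)\le m) = A_m(G)/3^{n-1}$. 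Because the denominators agree, the theorem is equivalent to the clean inequality $A_m(T)\ge A_m(P)$ for all $m\ge 0$; intuitively, the path admits the \emph{fewest} labelings squeezed into a small window. (A quick check on $K_{1,3}$ versus the path on $4$ vertices gives $A_2=27\ge 25$, matching this direction.)

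I would then prove $A_m(T)\ge A_m(P)$ by induction along a sequence of ``straightening'' surgeries that transform $T$ into $P$ through trees on $n$ vertices: each step detaches one branch at a vertex $v$ of degree $\ge 3$ and grafts its root onto a leaf of another branch at $v$ (a series composition), which lowers $\deg v$ by one without creating new high-degree vertices, so $\sum_w(\deg w-2)_+$ strictly decreases and finitely many moves reach the path. The heart of the argument is the Key Lemma: each surgery step $T \to T'$ (with $T'$ straighter) satisfies $A_m(T)\ge A_m(T')$ for all $m$. To establish it I would use the transfer-operator form of these counts. Rooting the tree and letting $M\in\{0,1\}^{(m+1)\times(m+1)}$ be the lazy-path transfer matrix $M_{ij}=\mathbb 1[|i-j|\le 1]$, the number of homomorphisms into $\{0,\dots,m\}$ is computed bottom-up by $W_v=\bigodot_{c}(M W_{c})$ (entrywise products over children); call it $H_m(G)=\mathbf 1^\top W_{\mathrm{root}}$, so that $A_m=H_m-H_{m-1}$ records the increment between window sizes $m$ and $m-1$. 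At the grafting vertex, the two configurations differ only in whether two incoming weight vectors are combined ``in parallel'' (entrywise product, as in $T$) or ``in series'' (one fed through $M$ into the other, as in $T'$), against a common context coming from the rest of the tree.

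The merge inequality then reduces to a correlation/rearrangement inequality for $M$: parallel composition, which keeps labels compact, produces at least as many bounded-window labelings as series composition, which lets labels spread. The crucial analytic input is the total positivity ($\mathrm{TP}_2$, equivalently variation-diminishing) of the banded $0/1$ matrix $M$; this log-supermodularity should yield the requisite FKG-type comparison of the two contractions against an arbitrary nonnegative context, uniformly in $m$, and hence for the increment $A_m=H_m-H_{m-1}$ as well. Equivalently, and perhaps more transparently, one can seek an explicit range-monotone bijection realizing the per-step domination directly on edge-difference vectors; by the finite Strassen principle such a matching exists precisely when the Key Lemma holds, so the two routes are interchangeable.

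I expect the Key Lemma to be the main obstacle, and specifically the passage from the single-vertex inequality to its propagation through the full recursion against an \emph{arbitrary} context, while simultaneously controlling the window-increment $A_m=H_m-H_{m-1}$ rather than the raw count $H_m$. It is exactly here that the lazy model is more forgiving than the standard one: the lazy transfer matrix $M$ is totally positive, whereas the standard transfer matrix $\mathbb 1[|i-j|=1]$ is bipartite and far less positive, so the context-uniform correlation inequality should survive for arbitrary subtree branches in the lazy case but, in the standard case, only for path-shaped branches---which is precisely the spider restriction under which the analogous partial result is available.
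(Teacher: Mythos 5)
Your reduction is exactly the paper's: since every tree on $n$ vertices has $3^{n-1}$ lazy labelings up to translation, the theorem is equivalent to the unnormalized inequality $A_m(T)\ge A_m(P)$ (the paper's $f^m(T)\ge f^m(P)$), and your straightening surgery is precisely the paper's inductive step of merging two bare paths $P_a,P_b$ at a branch vertex into a single path $P_{a+b}$ against the context $T'$. The problem is that the proposal stops exactly where the proof has to start. The Key Lemma has two layers: (i) the window comparison $H_m(T)\ge H_m(T')$ for each $m$, and (ii) the increment comparison $H_m(T)-H_{m-1}(T)\ge H_m(T')-H_{m-1}(T')$, i.e.\ that $D(m):=H_m(T)-H_m(T')$ is nondecreasing in $m$. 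Your TP${}_2$/FKG mechanism plausibly delivers (i): the paper's expansion $D(k)=\sum_{i<j}F_{i\to j}^k(P_a)\bigl(F_i^k(P_b)-F_j^k(P_b)\bigr)\bigl(F_i^k(T')-F_j^k(T')\bigr)$ is termwise nonnegative precisely because both vectors are monotone in $|i-k/2|$ (Lemma \ref{lazy center is bigger}), which is a Chebyshev/FKG-type correlation fact of the kind you describe. But the clause ``uniformly in $m$, and hence for the increment $A_m=H_m-H_{m-1}$ as well'' is a non sequitur: knowing $D(m)\ge 0$ for every $m$ says nothing about $D(m)$ being monotone in $m$, and (ii) is where essentially all of the work in the paper lies. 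There, the $(i,j)$ summand of $D(k)$ is matched with the $(i,j)$ summand of $D(k+1)$ when $\tfrac{i+j}{2}\le\tfrac k2$, but with the \emph{shifted} $(i+1,j+1)$ summand when $\tfrac{i+j}{2}>\tfrac k2$, and each matching requires the separate monotonicity-in-$k$ statements for the differences $F_j^k-F_i^k$ (Lemma \ref{lazy f center is better} and Corollary \ref{lazy bigger too}), which are themselves proved by a delicate induction. Nothing in the proposal supplies these inputs or a substitute for the index-shifting step.

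Two smaller points. The fallback via the finite Strassen principle is circular: the existence of a range-monotone coupling is \emph{equivalent} to the per-step domination you are trying to prove, so it cannot serve as an independent route. And your diagnosis of why the standard model resists this argument is not quite the paper's: the obstruction recorded there is not merely that the non-lazy transfer matrix is ``less positive,'' but that the center-is-bigger monotonicity (the analogue of Lemma \ref{lazy center is bigger}) is simply false for general rooted trees in the standard model (e.g.\ a star rooted at a leaf), which is what confines the standard-walk result to spiders. In short, you have correctly located the main obstacle, but the proposal does not overcome it.
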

\begin{theorem}\label{Main Result 2}
    Let $T$ be a spider on $n$ vertices and $P$ be the path on $n$ vertices. Then for all $k$,
    \begin{equation*}
        \P_{f \in \F(T)}(Range(f) \ge k) \leq \P_{f \in \F(P)}(Range(f) \ge k).
    \end{equation*}
\end{theorem}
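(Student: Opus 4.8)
The plan is to exploit the fact that, for a tree, standard walks have an especially simple structure: since a tree is acyclic, choosing $f \in \F(T)$ uniformly is the same as orienting each edge independently by a fair coin. Rooting a spider $T$ at its unique branch vertex $c$ with $f(c)=0$, the $d$ legs then become $d$ \emph{mutually independent} simple random walks emanating from $0$. For a walk $S_0=0,S_1,\dots,S_\ell$ write $M=\max_i S_i\ge 0$ and $\bar m=-\min_i S_i\ge 0$. If legs $1,\dots,d$ have lengths $\ell_1,\dots,\ell_d$ with $\sum_i \ell_i = n-1$, then since every vertex value lies on some leg and the center is $0$,
\[
\Range(f) \;=\; \max_i M_i + \max_i \bar m_i ,
\]
with the pairs $(M_i,\bar m_i)$ independent across legs; for the path $P$ on $n$ vertices the same formula holds with a single walk of length $n-1$. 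Thus Theorem \ref{Main Result 2} is equivalent to the statement that this ``max over independent legs'' range is stochastically dominated by the range of one walk of the summed length.

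I would prove this by induction on the number of legs $d$, the case $d\le 2$ being a path (equality). For the inductive step I would show that \emph{merging two legs increases the range stochastically}: replacing legs $1,2$ (lengths $a,b$) by a single leg of length $a+b$ yields a spider with $d-1$ legs whose range dominates, and iterating merges any spider up to the path $P$. To prove the merge step I would condition on the outer legs $3,\dots,d$, whose influence enters only through the constants $\alpha=\max_{i\ge 3}M_i$ and $\beta=\max_{i\ge 3}\bar m_i$; as these legs are independent of legs $1,2$ and of the merged leg, it suffices to prove, for all integers $\alpha,\beta\ge 0$ and all $k$,
\begin{equation*}
\P\left(\max(\alpha,M)+\max(\beta,\bar m)\ge k\right)\;\ge\;\P\left(\max(\alpha,M_1,M_2)+\max(\beta,\bar m_1,\bar m_2)\ge k\right),\tag{$\star$}
\end{equation*}
where $(M,\bar m)$ comes from one walk of length $a+b$ and $(M_1,\bar m_1),(M_2,\bar m_2)$ from independent walks of lengths $a,b$.

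The inequality $(\star)$ is the heart of the argument. At $\alpha=\beta=0$ it is an equality, since a two-leg spider is itself a path of length $a+b$; the content is that switching on the outer legs can only help the single long walk. I expect the main obstacle to be that $(\star)$ \emph{cannot} be obtained from a pointwise monotone coupling: one cannot in general couple the single walk with the two shorter ones so that both $M\ge\max(M_1,M_2)$ and $\bar m\ge\max(\bar m_1,\bar m_2)$ hold almost surely. Already for $a=b=1$ the two independent one-step walks can reach both $+1$ and $-1$, while no single two-step walk reaches both, so by Strassen's theorem no monotone coupling exists; the natural ``split the long walk at time $a$'' coupling in fact \emph{decreases} the range on exactly these configurations. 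Consequently $(\star)$ must be established at the level of distributions rather than by a soft coupling.

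To attack $(\star)$ I would use the explicit joint law of $(M,\bar m)$. Writing $p_\ell(x,y)=\P(M\le x,\ \bar m\le y)$ for the probability that a length-$\ell$ walk is confined to the window $[-y,x]$, the method of images (two-barrier reflection) expresses $p_\ell(x,y)$ as an alternating sum of binomial coefficients, and the events in $(\star)$ are unions of such confinement events along the staircase boundary $\max(\alpha,\cdot)+\max(\beta,\cdot)=\text{const}$. The plan is to rewrite both sides of $(\star)$ in terms of the $p_\ell(x,y)$ --- using independence, the right-hand joint law factors as $\P(\max(M_1,M_2)\le x,\ \max(\bar m_1,\bar m_2)\le y)=p_a(x,y)\,p_b(x,y)$ --- and to reduce $(\star)$ to a finite family of inequalities relating $p_{a+b}(x,y)$ to $p_a(x,y)\,p_b(x,y)$ summed along these staircases. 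I expect this comparison to be delicate: it is \emph{not} a clean termwise submultiplicativity $p_{a+b}\le p_a p_b$ (which already fails when $0$ sits at the edge of the window, e.g.\ for confinement to $[0,w]$), but a more subtle combination requiring the monotonicity and convexity/log-concavity properties of the confinement probabilities in $\ell$, $x$, and $y$. Carrying out this estimate is where the real work lies, and the whole method hinges on the spider having a single branch vertex, so that all legs are independent walks from a common origin --- precisely the feature that is lost for general trees.
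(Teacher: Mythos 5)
Your reduction is sound as far as it goes: the observation that a uniform element of $\F(T)$ on a tree corresponds to independent fair signs on the edges, the identity $\Range(f)=\max_i M_i+\max_i \bar m_i$ for a spider rooted at its branch vertex, the leg-merging induction, and the reduction of the merge step to the inequality $(\star)$ for fixed $\alpha,\beta\ge 0$ are all correct. (The merging strategy is in fact the same skeleton the paper uses.) Your remark that no monotone coupling can exist -- already for $a=b=1$ the two-leg spider reaches both $+1$ and $-1$ with positive probability while a two-step walk never does -- is also correct and correctly identifies why the problem is genuinely distributional. But the proposal stops exactly at the point where the proof has to happen: $(\star)$ is the entire content of the theorem, and you do not prove it. What you offer instead is a candidate toolbox (method of images for the confinement probabilities $p_\ell(x,y)$, a staircase inclusion--exclusion, and some hoped-for comparison of $p_{a+b}(x,y)$ with $p_a(x,y)\,p_b(x,y)$) together with the admission that the naive submultiplicativity fails and that ``carrying out this estimate is where the real work lies.'' That is a plan of attack, not a proof, and it is not clear the reflection-formula route closes: the alternating sums from the two-barrier reflection principle are notoriously resistant to termwise comparison, and you give no candidate inequality that you have verified even in small cases beyond the trivial $\alpha=\beta=0$ one.

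For contrast, the paper completes precisely this step without any explicit reflection formulas, by working with the window counts $F_i^k$ (note $F_i^k(T)/2^{n-1}=\P(M\le k-i,\ \bar m\le i)$, so these are exactly your confinement probabilities $p(x,y)$ re-indexed by the root's position $i$ in a window of width $k$). The two inputs are: (i) an exact algebraic identity (Lemma \ref{spidersums}) expressing the effect of merging two legs as a sum over pairs $(i,j)$ of manifestly sign-determined products, and (ii) monotonicity lemmas proved by induction on the leg length -- $F_i^k(P_a)$ is unimodal in $i$ about $k/2$ (Lemma \ref{Center Is Bigger}), and the differences $F_j^k-F_i^k$ grow when the window widens, after recentering (Lemma \ref{1D Below Average} and Corollary \ref{Also Bigger}). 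Each summand in the identity is then shown to increase as $k\to k+1$, which is the discrete derivative statement equivalent to your $(\star)$ summed over the staircase. If you want to complete your argument, the shortest route is to abandon the closed-form reflection sums and instead establish these unimodality and difference-monotonicity properties of $p_\ell(x,y)$ directly by the one-step recursion $F_i^k(P_a)=F_{i+1}^k(P_{a-1})+F_{i-1}^k(P_{a-1})$, which is essentially reconstructing the paper's Lemmas \ref{Center Is Bigger}--\ref{Also Bigger}. As it stands, the proposal has a genuine gap at its central inequality.
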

\subsection{Remarks}
The above definitions and conjectures are even more natural when restricted to trees. In the case of trees on $n$ vertices, there are always $2^{n-1}$ elements of $\F(T)$ (or $3^{n-1}$ in the lazy case), and consequently the computations of probabilities are replaced by enumerations of sets. The case of a tree-indexed random walk had been studied before the introduction of $G$-indexed random walks, although this earlier work was concentrated on infinite trees (for example, \cite{bp942, bp94}). Regarding graph homomorphisms specifically, much of the literature so far has been asymptotic and hence does not provide the exact precision required to show domination of distributions (see \cite{byy07, g03, k01}). In addition to the work mentioned above, Csikv{\'a}ri and Lin \cite{cl14} study random graph homomorphisms from trees into paths, the number of which is counted (in our notation) by $F^k(T)$, a key quantity we work with in the body of this paper.

It may also be worth remarking on the obstacles that prevent extending the result for lazy walks to standard walks. A major problem in the standard case is as follows: let $P_3$ be the path with 3 edges. We are curious about $f_k^2(P_3)$, which we define later to be the number of valid ways to label the vertices with labels in $[0,2]$ such that the first vertex is labeled with $k$ and at least one vertex is labeled $2$. Intuitively, this quantity should increase as $k$ becomes closer to 2. However it does not: there are three labelings when $k=1$ and two when $k=2$. In what is likely a direct consequence of this, another problem arises: Consider the tree with 7 edges given by taking a path of 3 edges and appending a pair of leaves to both endpoints. This tree has two vertices of degree 3, but is not dominated by any other tree with seven edges other than the path of length seven. Consequently, no inductive argument that considers only one high-degree vertex at a time will be sufficient to handle this tree. 

\subsection{Acknowledgments}
The first author would like to thank Jeff Kahn for suggesting this problem to him in 2017. This research was carried out in part at the Duluth REU, which is supported by NSF/DMS grant 1650947 and NSA grant
H98230-18-1-0010, and by the University of Minnesota Duluth.

\section{Preliminaries}\label{Prelims}
We begin with the standard case, where adjacent vertices must have labels differing by exactly 1.
\begin{definition}
For a given tree $T$, let $F^k(T)$ be the number of labelings of $T$ with integers from $0$ to $k$ such that adjacent vertices are labeled with consecutive integers. Such labelings will be referred to as ``valid.'' 
\end{definition}

\begin{definition}
For a given tree $T$, let $f^k(T)$ be the number of labelings of $T$ (with, say, integers from $0$ to $k$) such that adjacent vertices are labeled with connected integers, up to equivalence by translation.
\end{definition}

\noindent
\textit{Remark.} We may now restate the strong range conjecture as: \textit{$f^k(T) \ge f^k(P)$ for all $k$}.

\begin{proposition}
	$f^k(T) = F^k(T) - F^{k-1}(T)$.
\end{proposition}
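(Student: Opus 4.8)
The plan is to prove the identity $f^k(T) = F^k(T) - F^{k-1}(T)$ by setting up a bijection-style accounting argument that relates the two kinds of counting. Recall that $F^k(T)$ counts valid labelings using integers from $0$ to $k$ (with a fixed absolute scale), while $f^k(T)$ counts valid labelings up to translation whose range is at most $k$. The natural strategy is to partition the labelings counted by $F^k(T)$ according to how many of the extreme values $\{0,1,\dots,k\}$ they actually use, and to identify which of these correspond, after quotienting by translation, to genuinely ``new'' labelings that do not already fit into a smaller window.

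First I would make precise the translation equivalence: two valid labelings are equivalent iff they differ by an additive constant, and each equivalence class has a canonical representative determined by its range. The key observation is that a valid labeling $f$ of $T$ counted by $F^k(T)$ has range exactly $r$ for some $0 \le r \le k$, and within the window $[0,k]$ a class whose range is exactly $r$ has precisely $k - r + 1$ integer translates that still lie entirely in $[0,k]$. I would then want to relate $F^k(T)$ to a sum over ranges, but the cleaner route is the difference itself: I claim $F^k(T) - F^{k-1}(T)$ counts exactly those labelings in $[0,k]$ that \emph{attain the value} $k$ (equivalently, that do not fit into $[0,k-1]$), since every labeling counted by $F^{k-1}(T)$ is also counted by $F^k(T)$ via the identity inclusion, and the labelings in $F^k(T)$ not of this form are precisely those living in $[0,k-1]$.

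The crux is then to show that the set of valid labelings on $[0,k]$ that attain the value $k$ is in bijection with the translation-equivalence classes of range at most $k$, i.e.\ with $f^k(T)$. For the forward direction, given a labeling attaining $k$, I translate it downward until its minimum value is $0$; this produces a canonical representative of range at most $k$, and it is injective because the original labeling is recovered from its translate together with the single translation amount that brought the maximum up to $k$. For surjectivity, given any class of range $r \le k$, I take its min-zero representative and translate it up by $k - r$ so that its maximum becomes exactly $k$; this is a valid labeling on $[0,k]$ attaining $k$. The main obstacle I anticipate is being careful about edge cases and degeneracies: handling the single-vertex or trivial tree, confirming the range-$r$ class really has a well-defined min-zero representative, and verifying that the ``attains $k$'' condition exactly matches ``does not fit in $[0,k-1]$'' (which uses that all labelings here have minimum $0$ or can be normalized, so a labeling lies in $F^{k-1}(T)$ precisely when its maximum is at most $k-1$). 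Once these normalizations are pinned down, the bijection is clean and the identity follows immediately.
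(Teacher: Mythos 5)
Your proposal is correct and is essentially the paper's own argument: you identify $F^k(T)-F^{k-1}(T)$ with the labelings in $[0,k]$ attaining the value $k$, and then observe that each translation class of range at most $k$ has exactly one such representative (the one normalized so its maximum is $k$). The paper states this in two sentences; your extra care about normalization and edge cases is fine but not a different route.
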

\begin{proof}
Since every valid labeling bounded by $k-1$ is also a valid labeling bounded by $k$, $F^k(T) - F^{k-1}(T)$ counts the number of valid labelings of $T$ bounded by $k$ that are \textit{not} bounded by $k-1$, i.e. those for which at least one vertex is labeled $k$. Every equivalence class of labelings with range at most $k$ will have exactly one member in this set; simply translate the labeling so the maximum label equals $k$.
\end{proof}

\begin{definition}\label{F_i definition}
	For a tree $T$ with specified root, let $F_i^k(T)$ be the number of labelings of $T$ with labels in $\{0, \ldots, k\}$ such that the root is labeled $i$. 
	
	Let $P_a = \{p_0, p_1, \ldots, p_a\}$ be the path with $a$ edges rooted at its endpoint $p_0$. For paths, let $F_{i \to j}^k(P_a)$ denote the number of valid labelings of $P_a$ such that the label of $p_0$ is $i$, and the label of $p_a$ is $j$. If, for example, $i < 0$, this quantity is simply 0. Similarly, if $i - j \not\equiv a \mod 2$, this quantity will be zero as well.
	
\end{definition}

\noindent
\textit{Remark.} By reflection, we have that $F_i^k(T) = F_{k-i}^k(T)$. For paths in particular, we can condition on whether $p_1 - p_0$ is positive or negative to obtain the recursive formula $F_i^k(P_a) = F_{i+1}^k(P_{a-1}) + F_{i-1}^k(P_{a-1}).$ Similarly, given the first $a-1$ labels, we have no more than 2 choices for the final label, so $F_i^k(P_a) \le 2 F_i^k(P_{a-1})$.

We continue with another intuitive result: a path has more labelings within a bounded interval when its root is closer to the center of that interval. 
\begin{lemma}\label{Center Is Bigger}
	$\left|i - \frac{k}{2}\right| \le \left|j - \frac{k}{2}\right| \Rightarrow F_i^k(P_a) \ge F_j^k(P_a). $
\end{lemma}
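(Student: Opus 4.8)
The plan is to prove a statement strictly stronger than the lemma, namely that for each fixed $k$ and $a$ the sequence $\big(F_i^k(P_a)\big)_i$, extended by the value $0$ at $i=-1$ and $i=k+1$, is \emph{concave}: its second differences $F_{i-1}^k(P_a) - 2F_i^k(P_a) + F_{i+1}^k(P_a)$ are $\le 0$ for every $i \in \{0,1,\ldots,k\}$. Once concavity is known, the lemma follows quickly. Indeed, concavity makes the first-difference sequence non-increasing, so the sequence rises to a single peak and then falls (unimodality); combined with the reflection symmetry $F_i^k(P_a) = F_{k-i}^k(P_a)$ recorded in the remark, the first-difference sequence is antisymmetric about $k/2$, the peak sits at the center $k/2$, and the common value at distance $d$ from the center is a non-increasing function of $d$. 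Reflecting $i$ and $j$ into the left half $\{0,\ldots,\lfloor k/2\rfloor\}$ via $x \mapsto \min(x,k-x)$ (which preserves both the $F$-value and the distance to $k/2$) then reduces the claim $\left|i-\frac{k}{2}\right| \le \left|j-\frac{k}{2}\right| \Rightarrow F_i^k(P_a) \ge F_j^k(P_a)$ to monotonicity on the non-decreasing half, which is immediate.

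It remains to prove concavity, which I would do by induction on $a$ using the recursion $F_i^k(P_a) = F_{i-1}^k(P_{a-1}) + F_{i+1}^k(P_{a-1})$ from the remark (valid for $i \in \{0,\ldots,k\}$, with out-of-range terms read as $0$). The base case $a=0$ is the constant sequence $F_i^k(P_0) \equiv 1$, whose interior second differences vanish and whose two boundary second differences equal $-1$. For the inductive step write $G_i = F_i^k(P_a)$ and $F_i = F_i^k(P_{a-1})$. At an interior index $1 \le i \le k-1$ all three of $G_{i-1},G_i,G_{i+1}$ are given by the recursion, and a short computation collapses the second difference of $G$ into a sum of two second differences of $F$,
\begin{equation*}
G_{i-1} - 2G_i + G_{i+1} = \big(F_{i-2} - 2F_{i-1} + F_i\big) + \big(F_i - 2F_{i+1} + F_{i+2}\big),
\end{equation*}
each summand being $\le 0$ by the inductive hypothesis.

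The one genuinely delicate point — and the step I expect to be the main obstacle — is the boundary, because the recursion does \emph{not} compute the out-of-range values $G_{-1} = G_{k+1} = 0$; these vanish by definition, not because they equal $F_{-2}+F_0$. Thus at $i=0$ the second difference of $G$ must be evaluated as $0 - 2G_0 + G_1$ with $G_0 = F_1$ and $G_1 = F_0 + F_2$, which simplifies to $F_0 - 2F_1 + F_2$, exactly the second difference of $F$ at the interior index $1$ (and symmetrically at $i=k$). This is where the naive approach breaks down: trying to induct on unimodality alone fails precisely at these boundary indices, since the increment one picks up there is controlled not by monotonicity of $F$ but by its curvature. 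Carrying the full concavity hypothesis — including its two boundary inequalities $F_1 \le 2F_0$ and $F_{k-1} \le 2F_k$, which the inductive step relies on when verifying concavity of $G$ at the indices adjacent to the endpoints — is what makes the induction close, and checking that these boundary second differences stay $\le 0$ is the crux of the argument.
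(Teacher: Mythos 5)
Your argument is correct, but it is genuinely different from the one in the paper. The paper inducts on $a$ directly on the statement of the lemma, using the recursion $F_i^k(P_a) = F_{i+1}^k(P_{a-1}) + F_{i-1}^k(P_{a-1})$; the delicate part there is not the boundary but the \emph{center}, which forces a three-way case split ($i \le k/2 - 1$; $k$ even with $i = k/2$; $k$ odd with $i = (k-1)/2$) and an auxiliary inequality $F_i^k(P_a) \le 2F_i^k(P_{a-1})$ to handle the two central cases. You instead prove the strictly stronger claim that the sequence $\bigl(F_i^k(P_a)\bigr)_{i=-1}^{k+1}$, zero-padded at both ends, is concave, and deduce the lemma from concavity plus the reflection symmetry $F_i^k = F_{k-i}^k$. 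The payoff of your route is that concavity is linear in the sequence and therefore closes cleanly under the two-term recursion --- the second difference of the new sequence at an interior index is a sum of two second differences of the old one --- with no even/odd case analysis at the center; the cost is that all the care migrates to the endpoints, where $G_{-1} = G_{k+1} = 0$ by fiat rather than by the recursion, and you correctly isolate and resolve exactly that issue (the second difference at $i=0$ collapses to the second difference of the previous sequence at index $1$, and the boundary inequalities $F_1 \le 2F_0$, $F_{k-1} \le 2F_k$ are what make the step at $i = 1$ and $i = k-1$ go through). I verified that the induction closes, including the degenerate small-$k$ cases where the index $1$ appearing in your boundary computation is itself an endpoint (it is still covered because your inductive hypothesis includes the padded boundary second differences). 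One caveat worth recording: your argument leans on the linear two-term recursion specific to paths, so unlike the paper's unimodality statement it does not suggest an extension to the tree version (Lemma \ref{lazy center is bigger}), since concavity is not preserved under the products that arise at branch vertices; but for the lemma as stated, which concerns only paths, your proof is complete and arguably cleaner.
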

\begin{proof}
Note that for $k = 0$ or $1$, the result is trivial. For $k \ge 2$, we proceed by induction on $a$. When $a = 0$, both quantities are 1. Assume now the result holds for $a - 1$. By reflection, it suffices to consider $j \leq i \leq k/2.$
\begin{itemize}
    \item If $i \le k/2 - 1$, then for all $j < i$, we have $j+1 < i+1  \le k/2$ and $j-1 < i-1 \le k/2.$ Inductively, we obtain     
    \begin{equation*}
        F_i^k(P_a) = F_{i+1}^k(P_{a-1}) + F_{i-1}^k(P_{a-1}) \ge F_{j+1}^k(P_{a-1}) + F_{j-1}^k(P_{a-1}) = F_j^k(P_a).
    \end{equation*}
    \item Otherwise, if $k$ is even and $i = k/2$, then 
    \begin{equation*}
    F_i^k(P_a) = F_{k/2+1}^k(P_{a-1}) + F_{k/2-1}^k(P_{a-1}) = 2F_{k/2-1}^k(P_{a-1}) \ge F_{k/2-1}^k(P_{a}) \ge F_j^k(P_a).
    \end{equation*}
    \item Else, if $k$ is odd and $i = k/2-1/2$, then
    \begin{equation*}
    F_i^k(P_a) = F_{k/2+1/2}^k(P_{a-1}) + F_{k/2-3/2}^k(P_{a-1}) = F_{k/2-1/2}^k(P_{a-1}) + F_{k/2-3/2}^k(P_{a-1}) \ge F_{k/2-3/2}^k(P_{a}) \ge F_j^k(P_a).
    \end{equation*}
\end{itemize}
\end{proof}
\begin{corollary}\label{Spider Center is Bigger}
Let $T_{a_1, a_2, \ldots, a_l}$ be the spider with paths of length $a_1, a_2, \ldots, a_l$ emanating from a root.  Then 
\begin{equation*}
\left|i - \frac{k}{2}\right| \le \left|j - \frac{k}{2}\right| \Rightarrow F_i^k(T_{a_1,a_2,a_3,\ldots, a_l}) \ge F_j^k(T_{a_1,a_2,a_3,\ldots, a_l}).
\end{equation*}
\end{corollary}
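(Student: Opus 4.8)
The plan is to reduce the spider to its legs via a product formula. Since $T_{a_1,\ldots,a_l}$ is rooted at its central vertex and each leg is a path $P_{a_m}$ meeting the rest of the spider only at that central vertex, once the label of the root is fixed the legs may be labeled completely independently of one another. First I would observe that a labeling of the spider with the root labeled $i$ is precisely a choice, for each leg $m$, of a valid labeling of $P_{a_m}$ whose rooted endpoint $p_0$ carries the label $i$. This is exactly the quantity counted by $F_i^k(P_{a_m})$ (the fixed value $p_0 = i$ contributes only one way to label the shared root, so no double-counting occurs), and conditional independence of the legs yields
\[
F_i^k(T_{a_1,\ldots,a_l}) = \prod_{m=1}^{l} F_i^k(P_{a_m}).
\]

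With this factorization in hand, the corollary follows almost immediately from Lemma \ref{Center Is Bigger}. Assuming $\left|i - \tfrac{k}{2}\right| \le \left|j - \tfrac{k}{2}\right|$, the lemma gives $F_i^k(P_{a_m}) \ge F_j^k(P_{a_m})$ for every leg length $a_m$. Since each of these factors is a nonnegative integer, I can multiply the inequalities across $m = 1, \ldots, l$ to conclude
\[
F_i^k(T_{a_1,\ldots,a_l}) = \prod_{m=1}^{l} F_i^k(P_{a_m}) \ge \prod_{m=1}^{l} F_j^k(P_{a_m}) = F_j^k(T_{a_1,\ldots,a_l}),
\]
which is exactly the desired statement.

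The only point requiring care --- and the closest thing to an obstacle --- is justifying the product formula, i.e. that fixing the root's label genuinely decouples the legs. This hinges on the root being the unique branch vertex of the spider, so that distinct legs share no vertex other than the root; any correlation between leg labelings would have to be transmitted through a common vertex, and there is none beyond the root, whose value has been fixed. I would state this decoupling explicitly and note the degenerate cases $l \le 2$, in which the spider is itself a path and the claim reduces directly to Lemma \ref{Center Is Bigger}. Beyond that, the argument is purely the elementary fact that a product of termwise-dominating sequences of nonnegative reals is itself dominating, so no further estimates are needed.
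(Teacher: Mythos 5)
Your proof is correct and follows the same route as the paper: factor $F_i^k$ of the spider as the product $\prod_m F_i^k(P_{a_m})$ over its legs and apply Lemma \ref{Center Is Bigger} factor by factor. The extra justification you give for the decoupling of the legs is sound but is taken for granted in the paper's one-line argument.
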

\begin{proof}
By Lemma \ref{Center Is Bigger}, if $\left|j - \frac{k}{2}\right| \leq \left|i - \frac{k}{2}\right|$, we have
\begin{equation*}
    F_j^k(T_{a_1,a_2,a_3,\ldots, a_l}) = \prod_{t = 1}^l F_j^k(P_{a_t}) \ge \prod_{t = 1}^l F_i^k(P_{a_t}) = F_i^k(T_{a_1,a_2,a_3,\ldots, a_l}).
\end{equation*}
\end{proof}

\textit{Remark.} We will see that this result may be extended to any tree in the lazy case. The fact that there is no clear way to do this in the standard case prevents us from discussing trees other than spiders.

\section{Main Results for Standard Walks}\label{Standard Results}

\begin{lemma}
\label{spidersums}
Let $T_{a_1, a_2, \ldots, a_l}$ be the spider with paths of length $a_1, a_2, \ldots, a_l$ emanating from a root.  Then 
\begin{equation*}
F^k(T_{a_1,a_2,a_3,\ldots, a_l}) - F^k(T_{a_1 + a_2, a_3, \ldots ,a_l}) = \sum_{0 \leq i < j \le k} F_{i \to j}^k(P_{a_1})
\left(F_{i}^k(P_{a_2}) - F_j^k(P_{a_2})\right)\left(F_{i}^k(T_{a_3,\ldots, a_l}) - F_{j}^k(T_{a_3,\ldots, a_l})\right).
\end{equation*}
\end{lemma}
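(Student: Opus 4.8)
The plan is to realize both spiders as labelings of a common backbone. Observe that $T_{a_1,a_2,a_3,\ldots,a_l}$ and $T_{a_1+a_2,a_3,\ldots,a_l}$ each consist of a path on $a_1+a_2$ edges—call its vertices $x_0,x_1,\ldots,x_{a_1+a_2}$—together with an attached copy of the sub-spider $T_{a_3,\ldots,a_l}$, and the two trees differ \emph{only} in where this sub-spider is attached. In $T_{a_1+a_2,a_3,\ldots,a_l}$ the sub-spider hangs off the endpoint $x_0$, while in $T_{a_1,a_2,a_3,\ldots,a_l}$ legs $1$ and $2$ meet at the central junction, so the sub-spider hangs off the internal vertex $x_{a_1}$. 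I would make this identification explicit and then enumerate the valid labelings of each tree by conditioning on the two labels $i := f(x_0)$ and $j := f(x_{a_1})$.

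Conditioning in this way factors each count, since in a tree the three pieces—the segment $x_0\cdots x_{a_1}$, the segment $x_{a_1}\cdots x_{a_1+a_2}$, and the sub-spider—meet only at the pinned vertices $x_0$ and $x_{a_1}$ and hence are labeled independently given $i$ and $j$. The segment $x_0\cdots x_{a_1}$ has both endpoints fixed and contributes $F_{i\to j}^k(P_{a_1})$, the segment $x_{a_1}\cdots x_{a_1+a_2}$ is rooted at $x_{a_1}$ and contributes $F_j^k(P_{a_2})$, and the sub-spider is rooted at its attachment point. The only asymmetry is which label governs the sub-spider, giving
\[
F^k(T_{a_1,a_2,\ldots,a_l}) = \sum_{0\le i,j\le k} F_{i\to j}^k(P_{a_1})\, F_j^k(P_{a_2})\, F_j^k(T_{a_3,\ldots,a_l}),
\]
\[
F^k(T_{a_1+a_2,a_3,\ldots,a_l}) = \sum_{0\le i,j\le k} F_{i\to j}^k(P_{a_1})\, F_j^k(P_{a_2})\, F_i^k(T_{a_3,\ldots,a_l}),
\]
and, after subtracting,
\[
F^k(T_{a_1,\ldots,a_l}) - F^k(T_{a_1+a_2,a_3,\ldots,a_l}) = \sum_{0\le i,j\le k} F_{i\to j}^k(P_{a_1})\, F_j^k(P_{a_2})\left(F_j^k(T_{a_3,\ldots,a_l}) - F_i^k(T_{a_3,\ldots,a_l})\right).
\]

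The remaining—and most delicate—step is to convert this asymmetric double sum into the symmetric product of differences in the statement. Here I would use the reflection symmetry $F_{i\to j}^k(P_{a_1}) = F_{j\to i}^k(P_{a_1})$ obtained by reversing the path. The diagonal terms $i=j$ vanish, and grouping each off-diagonal ordered pair $(i,j)$ with its mirror $(j,i)$ and factoring yields, for each unordered pair $i<j$, exactly
\[
F_{i\to j}^k(P_{a_1})\left(F_i^k(P_{a_2}) - F_j^k(P_{a_2})\right)\left(F_i^k(T_{a_3,\ldots,a_l}) - F_j^k(T_{a_3,\ldots,a_l})\right),
\]
so summing over $0\le i<j\le k$ recovers the claimed identity.

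I anticipate the main obstacle is bookkeeping the conditioning cleanly: one must verify that pinning the labels at $x_0$ and $x_{a_1}$ genuinely splits each tree into the three independent factors above, and—crucially—track that the sub-spider factor is controlled by $j$ in the original spider but by $i$ in the merged one, since this single sign of asymmetry is precisely what survives the subtraction. Once that is pinned down, the symmetrization via path reflection is the clever but short finish.
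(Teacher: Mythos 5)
Your proposal is correct and follows essentially the same route as the paper: both condition on the labels of the root and of the far endpoint of the first leg (the two possible attachment points), use independence of the pieces across these cut vertices to factor the counts, and then fold the double sum over ordered pairs into a sum over $i<j$ via the reflection symmetry $F_{i\to j}^k(P_{a_1})=F_{j\to i}^k(P_{a_1})$ before factoring the difference. The only cosmetic differences are your ``common backbone'' framing and that you subtract before symmetrizing rather than after; the algebra is identical.
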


\begin{proof}

We have:
\begin{align*}
F^k(T_{a_1,a_2,\ldots, a_l}) &= \sum_{i=0}^{k} F_i^k(P_{a_1})\cdots F_i^k(P_{a_l}) \\
&= \sum_{i=0}^{k} \sum_{j=0}^{k} F_{i \to j}^k(P_{a_1})F_{i}^k(P_{a_2})F_{i}^k(T_{a_3,\ldots, a_l})\\
&= \sum_{0 \leq i < j \le k} F_{i \to j}^k(P_{a_1})
    \left(F_{i}^k(P_{a_2})F_{i}^k(T_{a_3,\ldots, a_l})
    +F_{j}^k(P_{a_2})F_{j}^k(T_{a_3,\ldots, a_l}) \right).
\end{align*}

and
\begin{align*}
F^k(T_{a_1+a_2,\ldots, a_l}) &= \sum_{i=0}^{k} F_i^k(P_{a_1+a_2})\cdots F_i^k(P_{a_l}) \\
&= \sum_{i=0}^{k} \sum_{j=0}^{k} F_{i \to j}^k(P_{a_1})F_{j}^k(P_{a_2})F_{i}^k(T_{a_3,\ldots, a_l})\\
&= \sum_{0 \leq i < j \le k} F_{i \to j}^k(P_{a_1})
\left(F_{i}^k(P_{a_2})F_{j}^k(T_{a_3,\ldots, a_l})
+F_{j}^k(P_{a_2})F_{i}^k(T_{a_3,\ldots, a_l}) \right).
\end{align*}

Subtracting these equations and factoring yields the desired expression.
\end{proof}
We have now demonstrated that when combining two legs of a spider, $F^k$ increases. This would immediately be sufficient to show that $F^k$ for a spider is smaller than $F^k$ for a path. However we are concerned not directly with $F$, but rather with $f$, given by its partial differences. We continue by reproducing Lemma \ref{Center Is Bigger} for $f$:
\begin{lemma}\label{1D Below Average}
	Let $i < j \leq k$ such that $\frac{i+j}{2} \leq \frac{k}{2}$. Then:
	$$0 \leq F_j^k(P_a) - F_i^k(P_a) \leq F_j^{k+1}(P_a) - F_i^{k+1}(P_a).$$
\end{lemma}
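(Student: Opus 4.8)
The plan is to prove the two inequalities separately. The left inequality $0 \le F_j^k(P_a) - F_i^k(P_a)$ I would obtain directly from Lemma~\ref{Center Is Bigger}. Writing $s = i - \tfrac{k}{2}$ and $t = j - \tfrac{k}{2}$, the hypotheses $i<j$ and $i+j\le k$ say exactly that $s < t$ and $s+t \le 0$; a one-line case split on the sign of $t$ gives $|t| \le |s|$, i.e. $\bigl\lvert j - \tfrac{k}{2}\bigr\rvert \le \bigl\lvert i - \tfrac{k}{2}\bigr\rvert$, whence $F_j^k(P_a) \ge F_i^k(P_a)$.

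For the right inequality, fix $k$ and set
\[
\Delta_a(i,j) := \bigl(F_j^{k+1}(P_a) - F_i^{k+1}(P_a)\bigr) - \bigl(F_j^{k}(P_a) - F_i^{k}(P_a)\bigr),
\]
using the convention $F_x^m = 0$ for $x \notin \{0,\dots,m\}$, for which the path recursion $F_x^m(P_a) = F_{x-1}^m(P_{a-1}) + F_{x+1}^m(P_{a-1})$ still holds. I would prove $\Delta_a(i,j) \ge 0$ by induction on $a$. Applying the recursion to each of the four terms collapses everything to the clean relation $\Delta_a(i,j) = \Delta_{a-1}(i-1,j-1) + \Delta_{a-1}(i+1,j+1)$, and one would like to finish by applying the inductive hypothesis to each summand.

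The main obstacle is that the summand $\Delta_{a-1}(i+1,j+1)$ has index-sum $i+j+2$, which can exceed $k$ even when $i+j\le k$, so the hypothesis stated for ``$i+j\le k$'' fails to cover it. The remedy is to strengthen the induction, proving $\Delta_a(i,j)\ge 0$ for the wider range of pairs $i<j$ with $i+j\le k+1$ (and for $i<0$ as well, where $\Delta_a(i,j) = F_j^{k+1}(P_a) - F_j^{k}(P_a) \ge 0$ trivially). In this range, whenever $i+j\le k-1$ both children have index-sum at most $k+1$, so the hypothesis applies to each. The two largest sums are dispatched directly using the reflection identity $F_x^m(P_a) = F_{m-x}^m(P_a)$: for $i+j=k$ it collapses $\Delta_a(i,j)$ to $F_{i+1}^{k+1}(P_a) - F_i^{k+1}(P_a)$, and for $i+j=k+1$ it collapses it to $F_i^{k}(P_a) - F_{i-1}^{k}(P_a)$. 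In each case the constraint $i<j$ places the larger of the two indices at least as close to the center of its interval as the other, so Lemma~\ref{Center Is Bigger} makes the difference nonnegative.

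Finally, the base case $a=0$ is immediate, since $F_x^m(P_0) = \mathbf 1[0\le x\le m]$ gives $\Delta_0(i,j) = \mathbf 1[j=k+1] - \mathbf 1[i=k+1]$, which is nonnegative because $i<j$ together with $i+j\le k+1$ forces $i\ne k+1$. The step I expect to require the most care is the reflection bookkeeping for the two boundary sums $i+j\in\{k,k+1\}$: one must check that after applying $F_x^m = F_{m-x}^m$ the surviving index genuinely satisfies the ``closer to center'' condition of Lemma~\ref{Center Is Bigger} for both parities of $k$, which is exactly where the hypothesis $i+j\le k$ (now relaxed to $\le k+1$) is consumed.
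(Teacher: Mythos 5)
Your proof is correct and is essentially the paper's own argument: your $\Delta_a(i,j)$ is exactly the paper's $f_j^{k+1}(P_a) - f_i^{k+1}(P_a)$, your recursion $\Delta_a(i,j)=\Delta_{a-1}(i-1,j-1)+\Delta_{a-1}(i+1,j+1)$ is the paper's recursion for $f$, and the paper likewise strengthens the induction by first dispatching the boundary sums $i+j\in\{k,k+1\}$ via reflection and Lemma~\ref{Center Is Bigger}. The only cosmetic caveat is that the recursion $F_x^m(P_a)=F_{x-1}^m(P_{a-1})+F_{x+1}^m(P_{a-1})$ actually fails at $x=-1$ and $x=m+1$ under the zero convention, but you never apply it outside $0\le x\le m$, so nothing is affected.
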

\begin{proof}
	Positivity follows directly from Lemma \ref{Center Is Bigger}. For the second inequality we proceed in a similar manner to the proof of Lemma \ref{Center Is Bigger}: by induction on $a$, with special cases when $\frac{i+j}{2} = \frac{k}{2}$ and $\frac{i+j}{2} = \frac{k-1}{2}$. 

	When $a = 0$ the result is trivial. For our first special case, if $\frac{i+j}{2} = \frac{k}{2}$, then $\left|i - \frac{k}{2}\right| = \left|j - \frac{k}{2}\right| = \frac{j - i}{2}$, and so the left-hand side is zero by symmetry, whereas the right-hand side is non-negative by the Lemma \ref{Center Is Bigger}. In fact, the second inequality (though not the first) still holds when $\frac{i+j}{2} = \frac{k+1}{2}$: the right-hand size is now 0 by symmetry, whereas the left-hand side is non-positive by Lemma \ref{Center Is Bigger}.
	
	It remains to verify the second inequality when $\frac{i+j}{2} \le \frac{k - 1}{2}$. Let $f_j^{k+1}(P_a) := F_j^{k+1}(P_a) - F_j^{k}(P_a)$. We can rewrite the desired inequality as:
	$$
	f_j^{k+1}(P_a) \geq f_i^{k+1}(P_a).
	$$
	Combinatorially, one can show that $f_j^{k+1}(P_a)$ counts the number of paths starting at $j$ of length $a$ such that at least one vertex is labeled $k+1$. Consequently, the following recursive formula holds for $j < k+1$:
	$$
	f_j^{k+1}(P_a) = f_{j+1}^{k+1}(P_{a-1}) + f_{j-1}^{k+1}(P_{a-1}).
	$$
	We continue inductively. Assume the statement holds for paths of length $a-1$, and recall that we know the desired inequality to always be true whenever $\frac{i+j}{2} \in \{\frac k2, \frac{k+1}{2}\}$. When $\frac{i+j}{2} \le \frac{k - 1}{2}$, either $i < 0$ (in which case the statement is trivial) or $j \le k - 1$.
	Inductively, we have shown that $f_{j+1}^{k+1}(P_{a-1}) \ge f_{i+1}^{k+1}(P_{a-1})$, and $f_{j-1}^{k+1}(P_{a-1}) \ge f_{i-1}^{k+1}(P_{a-1})$ whenever $i+1 < j+1 \leq k$ and $\frac{(i+1)+(j+1)}{2} \leq \frac{k+1}{2}$. Both of these conditions are satisfied by our hypotheses, and adding these two inequalities produces our desired statement.
\end{proof}

Following the pattern above, we may now reproduce Lemma \ref{Spider Center is Bigger} for $f$:

\begin{lemma}\label{Bigger Difference}
	Let $T_{a_1, a_2, \ldots, a_l}$ be the spider with paths of length $a_1, a_2, \ldots, a_l$ emanating from a root, and let $i < j \leq k$ such that $\frac{i+j}{2} \leq \frac{k}{2}$. Then:
	\begin{equation*}
	0 \leq F_{j}^k(T_{a_1,\ldots, a_l}) - F_{i}^k(T_{a_1,\ldots, a_l}) \leq F_{j}^{k+1}(T_{a_1,\ldots, a_l}) - F_{i}^{k+1}(T_{a_1,\ldots, a_l}).
	\end{equation*}
\end{lemma}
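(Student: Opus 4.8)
The plan is to reduce the statement for spiders to the single-leg inequality already established in Lemma \ref{1D Below Average}, exploiting the fact that the legs of a spider are labeled independently once the root label is fixed. Since the root is a cut vertex, I would use the product formula
\[
F_m^k(T_{a_1,\ldots,a_l}) = \prod_{t=1}^l F_m^k(P_{a_t})
\]
(the same factorization used in Corollary \ref{Spider Center is Bigger}). Writing $u_t := F_i^k(P_{a_t})$, $v_t := F_j^k(P_{a_t})$, $u_t' := F_i^{k+1}(P_{a_t})$, and $v_t' := F_j^{k+1}(P_{a_t})$, the claim becomes the purely algebraic statement
\[
0 \le \prod_{t=1}^l v_t - \prod_{t=1}^l u_t \le \prod_{t=1}^l v_t' - \prod_{t=1}^l u_t'.
\]

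First I would collect the per-leg inequalities that feed the argument. The hypotheses $i<j$ and $\tfrac{i+j}{2}\le\tfrac k2$ give $i<j\le k-i$, so $j$ is at least as close to the center $k/2$ as $i$ is; hence Lemma \ref{Center Is Bigger} yields $0\le u_t\le v_t$ and $0\le u_t'\le v_t'$ for every leg. Monotonicity of the bound gives $u_t\le u_t'$ and $v_t\le v_t'$, since any labeling valid within $\{0,\ldots,k\}$ is valid within $\{0,\ldots,k+1\}$. Finally, Lemma \ref{1D Below Average} applied to each leg $P_{a_t}$ (whose hypotheses are exactly ours) gives the key difference bound $0\le v_t-u_t\le v_t'-u_t'$.

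The main step is then a telescoping (hybrid) argument to lift these per-factor facts to the product. I would write
\[
\prod_{t=1}^l v_t - \prod_{t=1}^l u_t = \sum_{t=1}^l \Bigl(\prod_{s<t} v_s\Bigr)\,(v_t-u_t)\,\Bigl(\prod_{s>t} u_s\Bigr),
\]
and compare this summand-by-summand against the identical expansion of $\prod_t v_t' - \prod_t u_t'$. Every factor appearing in a summand is non-negative and is dominated by its primed counterpart: $\prod_{s<t}v_s\le\prod_{s<t}v_s'$, $v_t-u_t\le v_t'-u_t'$, and $\prod_{s>t}u_s\le\prod_{s>t}u_s'$. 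Since a product of non-negative terms increases when each term increases, each summand is bounded by the corresponding primed summand, and summing over $t$ yields the right-hand inequality; the left-hand (positivity) inequality drops out because each summand is already non-negative.

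I do not expect a serious obstacle here: all of the analytic content lives in Lemma \ref{1D Below Average}, and the remaining work is the bookkeeping of the telescoping expansion. The one point requiring care is that the comparison is genuinely term-by-term and relies on all factors being non-negative, so I would invoke Lemma \ref{Center Is Bigger} up front to guarantee $u_t,v_t\ge 0$ and $v_t\ge u_t$ on every leg before rearranging. This is also the only place the spider structure enters: the factorization of $F_m^k$ over the legs is exactly what fails for a general tree, which is why the standard-case argument does not extend beyond spiders.
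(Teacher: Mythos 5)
Your proposal is correct, and it reaches the conclusion by a genuinely different decomposition than the paper. The paper proves the second inequality by induction on the number of legs $l$: it rewrites the claim as $f_j^{k+1}(T) \ge f_i^{k+1}(T)$ with $f^{k+1} := F^{k+1}-F^k$, interprets $f_j^{k+1}$ combinatorially as counting labelings that attain the value $k+1$, and uses the three-term identity $f(AB) = f(A)F(B)+F(A)f(B)+f(A)f(B)$ to compare term by term, invoking Lemmas \ref{Center Is Bigger} and \ref{1D Below Average}, Corollary \ref{Spider Center is Bigger}, and the induction hypothesis. You instead telescope in the root label: writing $\prod_t v_t - \prod_t u_t = \sum_t \bigl(\prod_{s<t}v_s\bigr)(v_t-u_t)\bigl(\prod_{s>t}u_s\bigr)$ and comparing factor by factor against the primed version. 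Your per-factor inputs all check out: $u_t \le v_t$ and $u_t' \le v_t'$ both follow from Lemma \ref{Center Is Bigger} (for the primed case note $i+j\le k < k+1$, so $j$ is still the closer label to the new center $\tfrac{k+1}{2}$), monotonicity in $k$ is immediate, and $v_t-u_t \le v_t'-u_t'$ is exactly Lemma \ref{1D Below Average}. The identity and the summand-by-summand comparison are valid since all factors are non-negative, and positivity of the left-hand side falls out of the same expansion, so Corollary \ref{Spider Center is Bigger} is not even needed separately. What your route buys is the elimination of both the induction on $l$ and the combinatorial interpretation of $f^{k+1}$ on spiders, reducing everything to the one-leg lemma by pure algebra; what the paper's route buys is a template that transfers verbatim to the lazy case for general trees (Lemma \ref{lazy f center is better}), where the relevant split is at an arbitrary cut vertex rather than over a list of legs --- though your two-factor telescoping would adapt to that setting as well.
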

\begin{proof}
	As before, positivity follows directly from Corollary \ref{Spider Center is Bigger}. For the second inequality, we proceed by induction on $l$. When $l = 1$, this is just Lemma \ref{1D Below Average}. Otherwise, assume this is true for spiders with $l-1$ legs and rewrite the desired inequality as
	$$
	f_j^{k+1}(T_{a_1,\ldots, a_l}) \geq f_i^{k+1}(T_{a_1,\ldots, a_l}).
	$$
	
	Combinatorially, we have that $f_j^{k+1}(T_{a_1,\ldots, a_l})$ counts the number of trees of the given form with root labeled $j$, such that at least one vertex is labeled $k+1$. Thus we have either a vertex along $P_{a_1}$ labeled $k+1$, a vertex along one of the remaining paths labeled $k+1$, or both:
	$$
	f_j^{k+1}(T_{a_1,\ldots, a_l}) = f_j^{k+1}(P_{a_1})F_j^{k}(T_{a_2,\ldots, a_l}) + F_j^{k}(P_{a_1})f_j^{k+1}(T_{a_2,\ldots, a_l}) + f_j^{k+1}(P_{a_1})f_j^{k+1}(T_{a_2,\ldots, a_l}).
	$$
	Applying Lemma \ref{Center Is Bigger}, Corollary \ref{Spider Center is Bigger}, Lemma \ref{1D Below Average}, and the induction hypothesis as appropriate, we see each term becomes smaller when $j$ is replaced by $i$, which completes the proof.
\end{proof}
\begin{corollary}\label{Also Bigger}
		Let $T = T_{a_1, a_2, \ldots, a_l}$ be the spider with paths of length $a_1, a_2, \ldots, a_l$ emanating from a root, and let $i < j \leq k$ such that $\frac{i+j}{2} \geq \frac{k}{2}$. Then
	\begin{equation*}
	0 \leq F_{i}^k(T) - F_{j}^k(T) \leq F_{i+1}^{k+1}(T) - F_{j+1}^{k+1}(T).
	\end{equation*}
\end{corollary}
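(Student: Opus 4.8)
The plan is to recognize that this corollary is nothing more than the mirror image of Lemma \ref{Bigger Difference}, accessed through the reflection symmetry $F_i^k(T) = F_{k-i}^k(T)$ noted in the remark following Definition \ref{F_i definition} (which applies since a spider is a tree). Given indices $i < j \le k$ with $\frac{i+j}{2} \ge \frac{k}{2}$, I would introduce the reflected pair
\begin{equation*}
i' := k - j, \qquad j' := k - i.
\end{equation*}
The first step is to check that $(i',j')$ satisfies the hypotheses of Lemma \ref{Bigger Difference} at level $k$. Since $i < j$ we get $i' < j'$; since $i \ge 0$ we get $j' = k - i \le k$; and since $\frac{i+j}{2} \ge \frac{k}{2}$ we compute $\frac{i'+j'}{2} = k - \frac{i+j}{2} \le \frac{k}{2}$, as required.

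Next I would simply invoke Lemma \ref{Bigger Difference} for the pair $(i',j')$, which yields
\begin{equation*}
0 \leq F_{j'}^k(T) - F_{i'}^k(T) \leq F_{j'}^{k+1}(T) - F_{i'}^{k+1}(T).
\end{equation*}
The final step is to translate each term back using reflection, being careful that the reflection map differs between the two ambient levels. At level $k$ we have $F_{j'}^k(T) = F_{k-i}^k(T) = F_i^k(T)$ and $F_{i'}^k(T) = F_{k-j}^k(T) = F_j^k(T)$, which turns the left inequality into $0 \le F_i^k(T) - F_j^k(T)$. At level $k+1$ the reflection sends $m \mapsto (k+1) - m$, so $F_{j'}^{k+1}(T) = F_{k-i}^{k+1}(T) = F_{i+1}^{k+1}(T)$ and $F_{i'}^{k+1}(T) = F_{k-j}^{k+1}(T) = F_{j+1}^{k+1}(T)$. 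Substituting these identities recovers exactly the claimed bound $0 \le F_i^k(T) - F_j^k(T) \le F_{i+1}^{k+1}(T) - F_{j+1}^{k+1}(T)$.

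The only genuinely delicate point — and the step I would flag as the main obstacle — is precisely this shift by one in the upper-level indices: reflecting at level $k+1$ rather than $k$ is what converts the differences of $F_{i'}^{k+1}, F_{j'}^{k+1}$ into differences indexed by $i+1$ and $j+1$ instead of $i$ and $j$. Everything else is bookkeeping, so the proof reduces to verifying the hypothesis transfer and carefully applying the two reflections at their correct levels.
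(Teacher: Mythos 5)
Your proposal is correct and matches the paper's own proof essentially verbatim: both apply Lemma \ref{Bigger Difference} to the reflected pair $(k-j,\,k-i)$ and translate back via $F_m^k(T)=F_{k-m}^k(T)$, with the index shift coming from reflecting at level $k+1$. Your write-up just makes the hypothesis check and the two-level reflection bookkeeping more explicit than the paper does.
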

\begin{proof}
	Positivity once again follows directly from Corollary \ref{Spider Center is Bigger}. For the second inequality, recall that reflection implies $F_{i}^k(T) = F_{k-i}^k(T)$. We see that the pair $(k - j, k - i)$ satisfies the hypotheses of Lemma \ref{Bigger Difference}, which yields:
	$$
	F_{i}^k(T) - F_{j}^k(T) = F_{k-i}^k(T) - F_{k-j}^k(T) \leq F_{k-i}^{k+1}(T) - F_{k-j}^{k+1}(T) = F_{i+1}^{k+1}(T) - F_{j+1}^{k+1}(T)
	$$
\end{proof}

We now prove the main result of this section.


\begin{proof}[\bf Proof of Theorem \ref{Main Result 2}]
We will combine one pair of legs of the spider at a time to inductively arrive at a path. Since $f^k(T)=F^k(T)-F^{k-1}(T)$ for any tree $T$, it suffices to show that 
\begin{equation*}
F^k(T_{a_1,a_2,a_3,\ldots, a_l}) - F^k(T_{a_1 + a_2, a_3, \ldots ,a_l}) \leq F^{k+1}(T_{a_1,a_2,a_3,\ldots, a_l}) - F^{k+1}(T_{a_1 + a_2, a_3, \ldots ,a_l}).
\end{equation*}

From Lemma \ref{spidersums}, we have 
\begin{equation*}
    F^k(T_{a_1,a_2,a_3,\ldots, a_l}) - F^k(T_{a_1 + a_2, a_3, \ldots ,a_l}) = 
    \sum_{0 \leq i < j \le k} 
    F_{i \to j}^k(P_{a_1})
    \left(F_{i}^k(P_{a_2}) - F_j^k(P_{a_2})\right)
    \left(F_{i}^k(T_{a_3,\ldots, a_l}) - F_{j}^k(T_{a_3,\ldots, a_l})\right).
\end{equation*}

We now look at what happens to each term when we increase $k$ to $k+1$. For convenience we write $T' :=T_{a_3,\ldots,a_\ell}$. When $\frac{i+j}{2} \le \frac{k}{2}$, we compare to the $(i,j)$ summand for $k+1$, and claim:
$$
 F_{i \to j}^k(P_{a_1})
    \left(F_{i}^k(P_{a_2}) - F_j^k(P_{a_2})\right)
    \left(F_{i}^k(T') - F_{j}^k(T')\right) \le  F_{i \to j}^{k+1}(P_{a_1})
    \left(F_{i}^{k+1}(P_{a_2}) - F_j^{k+1}(P_{a_2})\right)
    \left(F_{i}^{k+1}(T') - F_{j}^{k+1}(T')\right)
$$
We prove this inequality term-by-term. By definition $F_{i \to j}^k(P_{a_1}) \le F_{i \to j}^{k+1}(P_{a_1})$. By Lemma \ref{Bigger Difference}, we have that both $\left(F_{i}^k(P_{a_2}) - F_j^k(P_{a_2})\right)$ and $\left(F_{i}^k(T_{a_3,\ldots, a_l}) - F_{j}^k(T_{a_3,\ldots, a_l})\right)$ are negative, and decrease when $k$ is replaced by $k+1$. Consequently the summand is positive and increases. 

On the other hand, when $\frac{i+j}{2} > \frac{k}{2}$, we compare to the $(i+1, j+1)$ summand for $k+1$, i.e. we claim:
$$
 F_{i \to j}^k(P_{a_1})
    \left(F_{i}^k(P_{a_2}) - F_j^k(P_{a_2})\right)
    \left(F_{i}^k(T') - F_{j}^k(T')\right) \le 
    F_{i+1 \to j+1}^{k+1}(P_{a_1})
    \left(F_{i+1}^{k+1}(P_{a_2}) - F_{j+1}^{k+1}(P_{a_2})\right)
    \left(F_{i+1}^{k+1}(T') - F_{j+1}^{k+1}(T')\right)
$$
Again, the proof proceeds term-by-term. We have $F_{i \to j}^k(P_{a_1}) = F_{(k-i) \to (k-j)}^k(P_{a_1}) \le F_{(k-i) \to (k-j)}^{k+1}(P_{a_1}) = F_{i+1 \to j+1}^{k+1}(P_{a_1})$. From Corollary \ref{Also Bigger} we have that both $\left(F_{i}^k(P_{a_2}) - F_j^k(P_{a_2})\right)$ and $\left(F_{i}^k(T_{a_3,\ldots, a_l}) - F_{j}^k(T_{a_3,\ldots, a_l})\right)$ are positive, and increase when $k$ is replaced by $k+1$ and $(i,j)$ by $(i+1, j+1)$. Consequently the summand is again positive, and increases when moving from $k$ to $k+1$, which completes the proof.
\end{proof}

\clearpage
\section{The Lazy Random Walk Model}
The results in Sections \ref{Prelims} and \ref{Standard Results} may be converted easily to the lazy case with few modifications. Abusing notation, in this section we will instead let $F^k(T)$ be the number of labelings of $T$ with integers from $0$ to $k$ such that adjacent vertices are labeled with either consecutive or identical integers. 

\begin{lemma}\label{lazy center is bigger}
For any tree $T$ rooted at a vertex $v_0$, $|i-\frac k2| \le |j - \frac k2| \Rightarrow F_i^k(T) \ge F_j^k(T)$.
\end{lemma}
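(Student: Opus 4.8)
The plan is to induct on the number of vertices of $T$, proving the stronger structural statement that the whole sequence $\left(F_i^k(T)\right)_{i=0}^{k}$ is symmetric and unimodal about $k/2$ — call such a sequence \emph{center-peaked}. This is equivalent to the stated conclusion: taking $j = k-i$ in the hypothesis forces $F_i^k(T) = F_{k-i}^k(T)$ (symmetry), and taking adjacent indices gives the monotonicity toward the center, while conversely symmetry plus unimodality recovers the full implication $|i-\tfrac k2| \le |j - \tfrac k2| \Rightarrow F_i^k \ge F_j^k$. For $k \in \{0,1\}$ the claim is immediate as in Lemma \ref{Center Is Bigger}, so I assume $k \ge 2$.

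The key structural observation is a product recursion. Let $c_1, \ldots, c_d$ be the children of the root $v_0$ and let $T_t$ be the subtree rooted at $c_t$. Since the subtrees meet only at $v_0$, fixing $f(v_0) = i$ makes the restrictions to the $T_t$ independent, and in the lazy model each child $c_t$ may take any label in $\{i-1,i,i+1\} \cap \{0,\ldots,k\}$. Hence
$$ F_i^k(T) = \prod_{t=1}^{d} G_i^k(T_t), \qquad G_i^k(T_t) := F_{i-1}^k(T_t) + F_i^k(T_t) + F_{i+1}^k(T_t), $$
with the convention $F_m^k := 0$ for $m \notin \{0,\ldots,k\}$. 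Each $T_t$ has fewer vertices than $T$, so by the induction hypothesis each $\left(F_i^k(T_t)\right)_i$ is center-peaked.

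The heart of the argument is to show that the three-term smoothing $G_i^k(T_t)$ preserves center-peakedness. Symmetry is immediate from the reflection identity: $G_{k-i}^k = F_{k-i-1}^k + F_{k-i}^k + F_{k-i+1}^k = F_{i+1}^k + F_i^k + F_{i-1}^k = G_i^k$. For unimodality I compute the telescoping difference
$$ G_{i+1}^k(T_t) - G_i^k(T_t) = F_{i+2}^k(T_t) - F_{i-1}^k(T_t), $$
and it remains to check this is nonnegative whenever $i+1 \le k/2$. A short case analysis on whether $i+2$ falls below or above $k/2$ shows that $i+1 \le k/2$ forces $|i+2 - \tfrac k2| \le |i-1 - \tfrac k2|$ (the out-of-range case $i-1 < 0$ being trivial since $F^k \ge 0$), so the induction hypothesis for $T_t$ gives $F_{i+2}^k(T_t) \ge F_{i-1}^k(T_t)$. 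Thus each $G^k(T_t)$ is center-peaked, and since a product of nonnegative center-peaked sequences is again center-peaked — symmetry multiplies, and two termwise nondecreasing nonnegative sequences have a nondecreasing product — the recursion shows $F_i^k(T) = \prod_t G_i^k(T_t)$ is center-peaked, closing the induction.

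I expect the one genuinely delicate point to be the verification that $G_{i+1}^k - G_i^k = F_{i+2}^k - F_{i-1}^k$ and that both surviving terms sit on the correct side of the center so the induction hypothesis applies. This is precisely where laziness is essential: summing \emph{three} consecutive terms produces the clean difference of two terms $F_{i+2}^k - F_{i-1}^k$, both controlled by center-peakedness, whereas the standard model gives a two-term child-sum $F_{i-1}^k + F_{i+1}^k$ whose difference $(F_{i+2}^k - F_{i-1}^k) + (F_i^k - F_{i+1}^k)$ mixes terms of opposite monotonicity, so center-peakedness need not be preserved. This is exactly the obstruction flagged in Section 1 that blocks the extension to non-spider trees in the standard case.
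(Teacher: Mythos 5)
Your proof is correct and follows essentially the same route as the paper's: induction on $|V(T)|$ using the product decomposition over subtrees at the root together with the three-term lazy recursion across the root edge, with the inductive ``closer to the center is bigger'' hypothesis applied to the resulting terms. The only difference is packaging --- the paper splits into the cases $\deg(v_0)>1$ and $\deg(v_0)=1$ and matches the three terms $F_{i-1},F_i,F_{i+1}$ against $F_{j-1},F_{j+1},F_j$ via three triangle inequalities, whereas you reduce to adjacent indices (symmetry plus unimodality) so that the comparison telescopes to the single inequality $F_{i+2}^k \ge F_{i-1}^k$, which is a clean but equivalent reorganization.
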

Compare this to Lemma \ref{Center Is Bigger} and Corollary \ref{Spider Center is Bigger} of the standard case. In the lazy case, we are able to easily extend this result to all trees, whereas in the standard case, there are some trees for which this statement is simply false (e.g. a star rooted at a leaf).
\begin{proof}
If $k = 0$ or $1$ the result is trivial. Otherwise, assume $k \ge 2$ and proceed inductively on $|V(T)|$. If $\deg(v_0) > 1$, then we can write $T = T_1 \cup T_2$ the union of two, nonempty trees rooted at $v_0$ that only overlap at $v_0$. In this case we have inductively
$$
F_i^k(T) = F_i^k(T_1)F_i^k(T_2) \ge F_j^k(T_1)F_j^k(T_2) = F_j^k(T).
$$
Otherwise, $\deg(v_0) = 1$. Then there is a unique edge $e = (v_0,v_1)$, and we consider labelings of the subtree $T'$ induced by deleting $v_0$. Without loss of generality assume $j+1 \le i \le \frac k2$. Then by triangle inequality, we have the following inequalities:
$$
\left|i - \frac k2\right| \le \left|j+1 - \frac k2\right|,\quad \left|i+1 - \frac k2\right| \le \left|j - \frac k2\right|,\quad \left|i-1 - \frac k2\right| \le \left|j-1 - \frac k2\right|.
$$
Matching terms and applying the inductive hypothesis, we obtain:
$$
F_i^k(T) = F_{i-1}^k(T')+F_{i}^k(T')+F_{i+1}^k(T') \ge  F_{j-1}^k(T')+F_{j+1}^k(T')+F_{j}^k(T') = F_j^k(T).
$$
\end{proof}

We continue by reproducing Lemmas \ref{1D Below Average} and \ref{Bigger Difference}, as well as Corollary \ref{Also Bigger} in the lazy case. Once again, we are now able to prove these statements for all trees.

\begin{lemma}\label{lazy f center is better}
Let $i < j \le k$ such that $\frac{i+j}2 \le \frac k2$. Then for any tree $T$ rooted at a vertex $v_0$, 
$$
0 \le F_j^k(T) - F_i^k(T) \le  F_j^{k+1}(T) - F_i^{k+1}(T).
$$
\end{lemma}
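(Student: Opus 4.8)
The plan is to follow exactly the same inductive strategy that worked for Lemma \ref{1D Below Average} and Lemma \ref{Bigger Difference} in the standard case, but to induct on the number of vertices $|V(T)|$ rather than on a path length or a leg count, since Lemma \ref{lazy center is bigger} has already been established for all trees by such an induction. As in those earlier lemmas, positivity is immediate from Lemma \ref{lazy center is bigger}, so the entire content is the second inequality, which we rewrite as $f_j^{k+1}(T) \ge f_i^{k+1}(T)$ where $f_j^{k+1}(T) := F_j^{k+1}(T) - F_j^k(T)$ counts labelings with root $j$ in which at least one vertex receives the label $k+1$.

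First I would handle the boundary cases that make the induction go through, mirroring the standard-case proof: when $\frac{i+j}{2} = \frac k2$ the left-hand side vanishes by the reflection symmetry $F_i^k(T) = F_{k-i}^k(T)$ while the right-hand side is nonnegative by Lemma \ref{lazy center is bigger}; and the inequality (though not positivity) also survives at $\frac{i+j}{2} = \frac{k+1}{2}$, where now the right-hand side vanishes by symmetry and the left-hand side is nonpositive. These serve as base-of-induction anchors. It remains to treat $\frac{i+j}{2} \le \frac{k-1}{2}$.

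Next, with $T$ rooted at $v_0$, I would split on $\deg(v_0)$ exactly as in the proof of Lemma \ref{lazy center is bigger}. If $\deg(v_0) > 1$, write $T = T_1 \cup T_2$ meeting only at $v_0$; then $F_j^{k+1}(T) = F_j^{k+1}(T_1)F_j^{k+1}(T_2)$, and subtracting the analogous product for $k$ gives a three-term decomposition
\begin{equation*}
f_j^{k+1}(T) = f_j^{k+1}(T_1)F_j^{k}(T_2) + F_j^{k}(T_1)f_j^{k+1}(T_2) + f_j^{k+1}(T_1)f_j^{k+1}(T_2),
\end{equation*}
which is the direct analogue of the splitting used in Lemma \ref{Bigger Difference}. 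Each factor is either a nonnegative quantity monotone in centeredness (controlled by Lemma \ref{lazy center is bigger}) or a difference of the form $f^{k+1}$ (controlled by the induction hypothesis applied to the smaller trees $T_1,T_2$), so each of the three products only shrinks when $j$ is replaced by $i$. If instead $\deg(v_0) = 1$ with unique neighbor along edge $(v_0,v_1)$, delete $v_0$ to obtain $T'$ and use the lazy recursion $f_j^{k+1}(T) = f_{j-1}^{k+1}(T') + f_j^{k+1}(T') + f_{j+1}^{k+1}(T')$; applying the induction hypothesis termwise to $(i-1,j-1)$, $(i,j)$, and $(i+1,j+1)$ (after checking each shifted pair still satisfies the midpoint hypothesis, which follows from $\frac{i+j}{2}\le\frac{k-1}{2}$) and summing yields the claim.

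The main obstacle I anticipate is verifying that the hypotheses of the induction hypothesis are actually met for every shifted pair in the degree-one step, and in particular ensuring the recursion for $f_j^{k+1}$ is valid near the boundary $j = k+1$ — this is precisely where the standard-case proof needed the auxiliary fact that the inequality holds at $\frac{i+j}{2} \in \{\frac k2, \frac{k+1}{2}\}$, so I would lean on those anchor cases to close the gaps when a shifted index pushes the midpoint up to the symmetric threshold. Establishing the combinatorial interpretation of $f_j^{k+1}(T)$ (labelings attaining the top value $k+1$) and the resulting recursions in the lazy setting, where the root may keep its neighbor's label, is routine but must be stated carefully since the recursion now has three terms rather than two.
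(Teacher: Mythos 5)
Your proposal is correct and follows essentially the same approach as the paper's proof: induction on $|V(T)|$ with the anchor cases $\frac{i+j}{2} \in \{\frac{k}{2}, \frac{k+1}{2}\}$, a split on $\deg(v_0)$, the three-term product decomposition of $f^{k+1}$ when the root has degree greater than one, and the shifted three-term recursion when the root is a leaf. (Your version of the product decomposition, with $F^k$ in the two cross terms, is the correct disjoint-cases form; the paper's display writes $F^{k+1}$ there with a plus sign on the last term, which is a sign-level typo.)
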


\begin{proof}
Positivity follows directly from Lemma \ref{lazy center is bigger}, as well as the second inequality if either $\frac{i+j}{2} = \frac k2$ or $\frac{i+j}{2} = \frac{k+1}{2}$: in the former case the left-hand side is 0 and the right-hand side is non-negative, whereas in the latter case the right-hand side is 0 and the left-hand side is non-positive. It remains to prove the second inequality when $\frac{i+j}{2} \le \frac{k-1}{2}$. As before, proceed inductively on the size of $T$, and rewrite the inequality as:
$$
f_j^{k+1}(T) \ge f_i^{k+1}(T).
$$
If $\deg(v_0) > 1$, then we can write $T = T_1 \cup T_2$ the union of two nonempty trees rooted at $v_0$ that only overlap at $v_0$. In this case the inductive hypothesis and Lemma \ref{lazy center is bigger} allow us to compare term-by-term:
\begin{align*}
f_i^{k+1}(T) &= f_i^{k+1}(T_1)F_i^{k+1}(T_2)+F_i^{k+1}(T_1)f_i^{k+1}(T_2)+f_i^{k+1}(T_1)f_i^{k+1}(T_2)\\ &\ge f_j^{k+1}(T_1)F_j^{k+1}(T_2)+F_j^{k+1}(T_1)f_j^{k+1}(T_2)+f_j^{k+1}(T_1)f_j^{k+1}(T_2)\\ &= f_j^{k+1}(T).
\end{align*}
Otherwise, $\deg(v_0) = 1$. Then there is a unique edge $e = (v_0,v_1)$, and we consider labelings of the subtree $T'$ induced by deleting $v_0$. Via the combinatorial interpretation of $f$, for $i < k+1$ we have $f_i(T) = f_{i-1}(T')+f_i(T')+f_{i+1}(T')$. Since $\frac{i+1+j+1}{2} \le {k-1+2}{2} - {k+1}{2}$ by assumption, we can apply induction and match terms to obtain:
$$
f_i^{k+1}(T) = f_{i-1}^{k+1}(T')+f_{i}^{k+1}(T')+f_{i+1}^{k+1}(T') \ge  f_{j-1}^{k+1}(T')+f_{j}^{k+1}(T')+f_{j}^{k+1}(T') = f_j^{k+1}(T).
$$
\end{proof}
\begin{corollary}\label{lazy bigger too}
Let $T$ be a tree and let $i < j \le k$ such that $\frac{i+j}{2} \ge \frac k2$. Then:
$$
0 \le F_i^k(T) - F_j^k(T) \le F_{i+1}^{k+1}(T) - F_{j+1}^{k+1}(T).
$$
\end{corollary}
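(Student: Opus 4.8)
The plan is to mirror the proof of Corollary \ref{Also Bigger} from the standard case, replacing the standard-walk input by its lazy analogue Lemma \ref{lazy f center is better}. The whole argument rests on the reflection symmetry $F_a^m(T) = F_{m-a}^m(T)$, which holds in the lazy model for the same reason as in the standard model: the map sending every label $x$ to $m - x$ is a bijection on valid labelings bounded by $m$, since it preserves the property that adjacent labels differ by at most $1$. I would first record this symmetry explicitly, taking care to note that it is applied at two \emph{different} bounds, $m = k$ and $m = k+1$, and that the change of bound is exactly what converts reflected indices into the $(i+1,\,j+1)$ indices appearing in the statement.

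Concretely, given $i < j \le k$ with $\frac{i+j}{2} \ge \frac{k}{2}$, set $i' := k - j$ and $j' := k - i$. Then $i' < j'$, $\ j' \le k$, and $\frac{i'+j'}{2} = k - \frac{i+j}{2} \le \frac{k}{2}$, so the pair $(i', j')$ satisfies the hypotheses of Lemma \ref{lazy f center is better}. Applying that lemma gives
$$
0 \le F_{j'}^k(T) - F_{i'}^k(T) \le F_{j'}^{k+1}(T) - F_{i'}^{k+1}(T).
$$
It then remains only to translate these four quantities back via reflection. At bound $k$ we have $F_{j'}^k(T) = F_{k-i}^k(T) = F_i^k(T)$ and $F_{i'}^k(T) = F_{k-j}^k(T) = F_j^k(T)$, while at bound $k+1$ the reflection $x \mapsto (k+1) - x$ gives $F_{j'}^{k+1}(T) = F_{k-i}^{k+1}(T) = F_{i+1}^{k+1}(T)$ and $F_{i'}^{k+1}(T) = F_{k-j}^{k+1}(T) = F_{j+1}^{k+1}(T)$. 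Substituting yields precisely the claimed chain $0 \le F_i^k(T) - F_j^k(T) \le F_{i+1}^{k+1}(T) - F_{j+1}^{k+1}(T)$.

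For completeness I would also note that positivity follows directly from Lemma \ref{lazy center is bigger}: the hypothesis $i + j \ge k$ together with $i < j$ forces $\left|i - \frac{k}{2}\right| \le \left|j - \frac{k}{2}\right|$ (in the case $i \le \frac{k}{2} \le j$ this reduces to $i + j \ge k$, and in the case $\frac{k}{2} \le i < j$ it is immediate), so $F_i^k(T) \ge F_j^k(T)$. This is, however, already subsumed by the lower bound produced by the reflection argument, so I would likely just cite it in one line.

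I do not anticipate a genuine obstacle, since the lazy analogue Lemma \ref{lazy f center is better} is already established for all trees and the reflection symmetry is elementary. The one point demanding care is the index bookkeeping: one must apply reflection at the correct bound in each of the four terms and recognize that reflecting at bound $k+1$ (rather than $k$) is exactly what produces the shift $k - i \mapsto i + 1$ matching the right-hand side. Mishandling this shift is the easiest way to derive a slightly off inequality, so I would verify each of the four substitutions individually.
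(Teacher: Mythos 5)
Your proposal is correct and is essentially the paper's own proof: both apply the reflection symmetry $F_a^m(T)=F_{m-a}^m(T)$ to reduce to the pair $(k-j,\,k-i)$, invoke Lemma \ref{lazy f center is better}, and reflect back at bound $k+1$ to produce the shifted indices $i+1$, $j+1$. The paper states this more tersely, but your careful tracking of which bound the reflection is applied at is exactly the right point to be explicit about.
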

\begin{proof}
The symmetry of $F$ by reflection implies $F_i^k(T) = F_{k-i}^k(T)$. We see that the pair $(k-j,k-i)$ satisfies the hypotheses of Lemma \ref{lazy f center is better}, and so:
$$
F_i^k(T) - F_j^k(T) = F_{k-i}^k(T) - F_{k-j}^k(T) \le F_{k-i}^{k+1}(T) - F_{k-j}^{k+1}(T) = F_{i+1}^{k+1}(T) - F_{j+1}^{k+1}(T).
$$
\end{proof}


\subsection{Completing the proof for lazy walks}
\begin{proof}[Proof of Theorem \ref{Main Result 1}] Let $v_0$ be a vertex of $V$ with two bare paths emanating from it, so we can write $T = P_a \cup P_b \cup T'$ where $T'$ is the leftover vertices and edges not in $P_a$ or $P_b$. Inductively, it will suffice to demonstrate that $f_{k+1}(T) \ge f_{k+1}(T' \cup P_{a+b}$, as the process of combining the two paths $P_a$ and $P_b$ into one results in a tree with one fewer leaf, and therefore will eventually terminate in a path. Rewriting this inequality in terms of $F$, we want to demonstrate:
$$
F^k(T) - F^k(T' \cup P_{a+b}) \le F^{k+1}(T) - F^{k+1}(T' \cup P_{a+b}).
$$
We may expand both sides of this inequality in the same manner as Lemma \ref{spidersums}, e.g. the left-hand side becomes:
$$
\sum_{0 \leq i < j \le k} F_{i \to j}^k(P_{a_1})
\left(F_{i}^k(P_{a_2}) - F_j^k(P_{a_2})\right)\left(F_{i}^k(T') - F_{j}^k(T')\right).
$$
We now investigate what happens to a single summand when we move from $k$ to $k+1$.

When $\frac{i+j}{2} \le \frac{k}{2}$, we compare to the $(i,j)$ summand for $k+1$, and claim:
$$
 F_{i \to j}^k(P_{a_1})
    \left(F_{i}^k(P_{a_2}) - F_j^k(P_{a_2})\right)
    \left(F_{i}^k(T') - F_{j}^k(T')\right) \le  F_{i \to j}^{k+1}(P_{a_1})
    \left(F_{i}^{k+1}(P_{a_2}) - F_j^{k+1}(P_{a_2})\right)
    \left(F_{i}^{k+1}(T') - F_{j}^{k+1}(T')\right)
$$

By definition $F_{i \to j}^k(P_{a_1}) \le F_{i \to j}^{k+1}(P_{a_1})$. By Lemma \ref{lazy f center is better}, we have that both $\left(F_{i}^k(P_{a_2}) - F_j^k(P_{a_2})\right)$ and $\left(F_{i}^k(T') - F_{j}^k(T')\right)$ are negative, and decrease when $k$ is replaced by $k+1$. Consequently the summand is positive and increases. 

On the other hand, when $\frac{i+j}{2} > \frac{k}{2}$, we compare to the $(i+1, j+1)$ summand for $k+1$, i.e. we claim:
$$
 F_{i \to j}^k(P_{a_1})
    \left(F_{i}^k(P_{a_2}) - F_j^k(P_{a_2})\right)
    \left(F_{i}^k(T') - F_{j}^k(T')\right) \le 
    F_{i+1 \to j+1}^{k+1}(P_{a_1})
    \left(F_{i+1}^{k+1}(P_{a_2}) - F_{j+1}^{k+1}(P_{a_2})\right)
    \left(F_{i+1}^{k+1}(T') - F_{j+1}^{k+1}(T')\right)
$$
Again, the proof proceeds term-by-term. We have $F_{i \to j}^k(P_{a_1}) = F_{(k-i) \to (k-j)}^k(P_{a_1}) \le F_{(k-i) \to (k-j)}^{k+1}(P_{a_1}) = F_{i+1 \to j+1}^{k+1}(P_{a_1})$. From Corollary \ref{lazy bigger too} we have that both $\left(F_{i}^k(P_{a_2}) - F_j^k(P_{a_2})\right)$ and $\left(F_{i}^k(T') - F_{j}^k(T')\right)$ are positive, and increase when $k$ is replaced by $k+1$ and $(i,j)$ by $(i+1, j+1)$. Consequently the summand is again positive, and increases when moving from $k$ to $k+1$, which completes the proof.

\end{proof}

\bibliographystyle{acm}

\end{document}